\documentclass[11pt]{amsart}
\usepackage{amssymb,amsthm,amsmath}
\usepackage[bookmarks,colorlinks,breaklinks]{hyperref}
\usepackage{color}
\newtheorem{thm}{Theorem}

\newtheorem{prop}[thm]{Proposition}
\newtheorem{lem}[thm]{Lemma}

\newtheorem{question}[thm]{Question}



\theoremstyle{definition}

\numberwithin{thm}{section}
\numberwithin{equation}{thm}

\newcommand{\A}{\ensuremath{{\mathbb{A}}}}
\newcommand{\bP}{\ensuremath{{\mathbb{P}}}}

\renewcommand{\P}{\ensuremath{{\mathbb{P}}}}
\newcommand{\Q}{\ensuremath{{\mathbb{Q}}}}
\newcommand{\bQ}{\ensuremath{{\mathbb{Q}}}}
\newcommand{\Qbar}{\ensuremath{\overline {\mathbb{Q}}}}

\newcommand{\F}{\ensuremath{{\mathbb{F}}}}
\newcommand{\G}{\ensuremath{{\mathbb{G}}}}
\newcommand{\N}{\ensuremath{{\mathbb{N}}}}

\newcommand{\cL}{\mathcal L}

\newcommand{\cO}{\mathcal O}

\newcommand{\lra}{\longrightarrow}
\newcommand{\trdeg}{{\rm trdeg}}
\newcommand{\Kbar}{\ensuremath {\overline K}}
\newcommand{\kbar}{\ensuremath {\overline k}}
\newcommand{\Lbar}{\ensuremath {\overline L}}
\newcommand{\ba}{{\boldsymbol \alpha}}
\newcommand{\bF}{{\boldsymbol f}}

\DeclareMathOperator{\Gal}{Gal}

\DeclareMathOperator{\Aut}{Aut}

\begin{document}

\title{A question for iterated Galois groups in arithmetic dynamics}


\author[A. Bridy]{Andrew Bridy}
\address{Andrew Bridy\\Departments of Political Science and Computer Science\\ Yale University\\
New Haven, CT 06511 \\ USA}
\email{andrew.bridy@yale.edu}

\author[J. R. Doyle]{John R. Doyle}
\address{John R. Doyle \\ Department of Mathematics and Statistics \\ Louisiana Tech University \\ Ruston, LA 71272 \\ USA}
\email{jdoyle@latech.edu}

\author[D. Ghioca]{Dragos Ghioca}
\address{Dragos Ghioca \\ Department of Mathematics \\ University of British Columbia \\ Vancouver, BC V6T 1Z2 \\ Canada}
\email{dghioca@math.ubc.ca}

\author[L.-C. Hsia]{Liang-Chung Hsia}
\address{
Liang-Chung Hsia\\
Department of Mathematics\\
National Taiwan Normal University\\
Taipei, Taiwan, ROC
}
\email{hsia@math.ntnu.edu.tw}

\author[T. J. Tucker]{Thomas J. Tucker}
\address{Thomas J. Tucker\\Department of Mathematics\\ University of Rochester\\
Rochester, NY, 14620, USA}
\email{thomas.tucker@rochester.edu}

\subjclass[2010]{Primary 37P15, Secondary 11G50, 11R32, 14G25, 37P05, 37P30}

\keywords{Arithmetic Dynamics, Arboreal Galois Representations,
  Iterated Galois Groups}

\date{}

\dedicatory{}

\begin{abstract}
We formulate a general question regarding the size of the iterated Galois groups associated to an algebraic dynamical system and then we discuss some special cases of our question.
\end{abstract}

\maketitle


\section{Introduction and Statement of Results}\label{intro}


\subsection{A general question}

For any self-map $\Phi$ on a variety $X$ and for any integer $n\ge 0$, we let $\Phi^n$ be the $n$-th iterate of $\Phi$ (where $\Phi^0$ is the identity map, by definition). For a point $x\in X$, we denote by $\cO_\Phi(x)$ the orbit of $x$ under $\Phi$, i.e., the set of all $\Phi^n(x)$ for $n\ge 0$. We say that $x$ is preperiodic if its orbit $\cO_\Phi(x)$ is finite; furthermore, if $\Phi^n(x)=x$ for some positive integer $n$, then we say that $x$ is periodic.

For a projective variety $X$ endowed with an endomorphism $\Phi$, we say that $\Phi$ is polarizable if there exists an ample line bundle $\cL$ on $X$ such that $\Phi^*\cL$ is linearly equivalent to $\cL^{\otimes d}$ for some integer $d>1$. In particular, polarizable endomorphisms are dominant. 

\begin{question}
\label{conj:general}
Let $K$ be a number field or a function field of finite transcendence degree  over a field of characteristic $0$, let $X$ be a smooth, projective variety defined over $K$ and let $\Phi:X\lra X$ be a polarizable endomorphism. 

For each positive integer $n$, we have that $\Phi^n$ induces an inclusion of  the
function field $K(X)$ into itself; we let $G_n(\Phi, X)$ be the Galois group of
the Galois closure of $K(X)$ over itself with respect to this inclusion. We
let $G_\infty:=G_\infty(\Phi,X)$ be the inverse limit of these groups $G_n(\Phi,X)$.

For each point $x \in X(K)$, we let $G_n(\Phi,x)$ be the Galois group of
$K(\Phi^{-n}(x))$ over $K(x)$.  We let $G_{\infty}(x):=G_\infty(\Phi,x)$ be the inverse limit of the
groups $G_n(\Phi,x)$. We have that there is a natural embedding of $G_\infty(x)$ inside $G_\infty$.

Then is it true that at least one of the following statements must hold?
\begin{itemize}
\item[(A)] The index $\left[G_\infty(\Phi,X): G_\infty(\Phi,x)\right]$ is finite. 
\item[(B)] The point $x$ lies in the orbit of a
point in the ramification locus of $\Phi$. 
\item[(C)] The point $x$ lies on a proper subvariety $Y\subset X$ that is
  invariant under a non-identity self-map $\Psi: X \lra X$ with the
  property that $\Phi^n \circ \Psi =\Psi\circ\Phi^n$ for some some
  positive integer $n$. 
\end{itemize}
\end{question}

Jones \cite[Conjecture 3.11]{RafeArborealSurvey} proposes a similar conjecture for
quadratic rational functions $\Phi:\bP^1\lra\bP^1$ over number fields,
but our Question~\ref{conj:general} has not previously been posed
for arbitrary function fields $K$ (of characteristic $0$) or for
self-maps of higher dimensional varieties.

We note first that one needs to exclude the case of finite fields in Question~\ref{conj:general} since generally, the Galois group $G_\infty=G_\infty(\Phi,X)$ is not abelian (even in the case $X=\bP^1$ and $\Phi$ is a rational function), while $G_\infty(x)$ would have to be abelian if $X$ were defined over a finite field $\F_q$ because the Galois group of any  extension of finite fields is abelian. One could ask the same question from Question~\ref{conj:general} under the assumption that $K$ is a finitely generated infinite field (even when its characteristic is positive); however, there are potential complications when the map $\Phi$ is not separable. On the other hand, if $\Phi$ were separable, it is conceivable that one might expect the same conclusion as in Question~\ref{conj:general}.

Secondly, we note that one requires in Question~\ref{conj:general} the assumption that a generic point $x\in X$ has the same (finite) number of preimages under $\Phi$; hence, we could weaken the assumption on the endomorphism $\Phi$ from being polarizable to having the above property and still ask whether the same conclusion as in Question~\ref{conj:general} holds.

Now, in Question~\ref{conj:general}, one can see that if either conclusion~(B)~or~(C) holds, then this is likely to prevent the index $[G_\infty:G_{\infty}(x)]$ from being finite; this is similar to what happens even in the case of rational functions, i.e., $X=\bP^1$ (see \cite[Proposition~3.2]{Quad1}).

In this paper we discuss some special cases of Question~\ref{conj:general} which generally fall outside the scope of the previous studies of the arboreal Galois representation associated to a dynamical system. In particular, we treat the case when $K$ is a function field of transcendence degree greater than one (and $\Phi:\bP^1\lra\bP^1$ is a polynomial mapping), and also we discuss several cases when $X$ is a higher dimensional variety.


\subsection{The case of polynomials defined over function fields of higher transcendence degree}

Here we explain our results towards Question~\ref{conj:general} when $X=\bP^1$ and $\Phi$ is a polynomial mapping (which we denote by $f$), while $K$ is the function field of an arbitrary smooth projective variety defined over $\Qbar$. We start by explaining in more detail the groups appearing in Question~\ref{conj:general} for polynomial mappings $f$.

So, let $K$ be any field, let $f\in K[x]$ with $d=\deg f\geq 2$ and let
$\beta\in \P^1(\Kbar)$.  For $n\in\N$, let
$K_n(f,\beta)=K(f^{-n}(\beta))$ be the field obtained by adjoining the
$n$th preimages of $\beta$ under $f$ to $K(\beta)$. (We declare that
$K(\infty)=K$.)  Set
$K_\infty(f,\beta)=\bigcup_{n=1}^\infty K_n(f,\beta)$.  For
$n\in\N\cup\{\infty\}$, define
$G_n(f,\beta)=\Gal(K_n(f,\beta)/K(\beta))$.  In most of the paper, we
will write $G_n(\beta)$ and $K_n(\beta)$, suppressing the dependence
on $f$ if there is no ambiguity.

The group $G_\infty(\beta)$ embeds into $\Aut(T^d_\infty)$, the
automorphism group of an infinite $d$-ary rooted tree
$T^d_\infty$. Recently there has been much work on the problem of
determining when the index $[\Aut(T^d_\infty):G_\infty(\beta)]$ is
finite (see \cite{BFHJY, BIJJ, BGJT, BT2, BostonJonesArboreal, BostonJonesImage, JKMT, Juul, JonesComp, JonesManes, OdoniIterates, OdoniWreathProducts, PinkQuadratic, PinkQuadraticInfiniteOrbits, RafeArborealSurvey, Quad1}). By work of Odoni~\cite{OdoniIterates}, 
one expects that a generically chosen rational function has a surjective arboreal representation, 
i.e., that $[\Aut(T^d_\infty):G_\infty(\beta)]=1$.

In \cite{Quad1}, the authors studied the family of polynomials $f(x)=x^d+c\in K[x]$, where $K$ is the function field of a smooth, irreducible, projective curve defined over $\Qbar$; note that up to a change of variables, the above polynomials $f(x)$ represent  
all polynomials with precisely one (finite) critical point. Since the field $K$ contains a primitive $d$-th root of unity, 
then it is easy to show that for $f$ in this family, $G_\infty(\beta)$ sits inside $[C_d]^\infty$, 
the infinite iterated wreath product of the cyclic group $C_d$ (with $d$ elements); actually, with the notation as in Question~\ref{conj:general}, we have that $G_\infty=[C_d]^\infty$.  
As $\Aut(T^d_n)\cong [S_d]^n$, this means that if $d\geq 3$, then $[\Aut(T^d_\infty):[C_d]^\infty]=\infty$. Thus
it is impossible for $G_\infty(\beta)$ to have finite index within this family (except when $d=2$). 
However, this simply means that, given the constraint on the size of $G_\infty(\beta)$, 
we should ask a different finite index question. So, the correct problem to study is when $G_\infty(\beta)$ 
has finite index in $[C_d]^\infty=G_\infty$, exactly as predicted by Question~\ref{conj:general}. In our current paper, we extend the results of \cite{Quad1} to the case $K$ is an arbitrary function field defined over $\Qbar$.

So, let $K$ be the function field of a smooth, projective, irreducible variety $V$ over $\Qbar$. We say that $f\in K[x]$ is isotrivial if $f$ is defined over $\Qbar$ 
up to a change of variables, that is, if $\varphi^{-1}\circ f\circ \varphi\in \Qbar[x]$ for 
some $\varphi\in \Kbar[x]$ of degree $1$. In the special case of a unicritical polynomial $f(x)=x^d+c\in K[x]$, we have that $f$ is isotrivial if and only if $c\in\Qbar$. We say that $\beta$ is postcritical for $f$ if $f^n(\alpha)=\beta$ for some $n\ge 1$ and some critical point $\alpha$ of $f$.

The following result is an extension of \cite[Theorem~1.1]{Quad1} to function fields of arbitrary finite transcendence degree and it represents a special case of our Question~\ref{conj:general}.

\begin{thm}\label{p-theorem}
Let $K$ be the function field of a smooth, projective variety defined over $\Qbar$.  
  Let $q = p^{r}$ ($r\ge 1$) be a power of the prime number $p$, let $c\in K\setminus \Qbar$, let
  $f(x) = x^q + c \in K[x]$ and let $\beta\in K$. Then the 
  following are equivalent:
  \begin{enumerate}
  \item The point $\beta$ is neither periodic nor postcritical for $f$.
  \item The group $G_\infty(\beta)$ has finite index in $G_\infty$.
  \end{enumerate}
\end{thm}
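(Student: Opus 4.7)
The plan is to prove the two implications separately. The direction $(2) \Rightarrow (1)$ follows from essentially the same obstructions as in the curve case treated in \cite{Quad1}. Arguing by contrapositive: if $\beta$ has minimal period $m$, then $\beta, f(\beta), \ldots, f^{m-1}(\beta)$ all lie in $K$ and single out a $K$-rational infinite backward path in the preimage tree of $\beta$, so $G_\infty(\beta)$ is trapped inside the stabilizer of that path in $[C_q]^\infty$, which has infinite index. If instead $\beta = f^m(0)$ is postcritical, then the critical point $0$ lies in $f^{-m}(\beta)$ and every further pullback through $0$ reduces to extracting a single $q$-th root of $-c$; the resulting collisions force $f^n(x) - \beta$ to have strictly fewer than $q^n$ distinct roots, with the defect growing in $n$, so again the index is infinite. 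Both counts mirror those in \cite{Quad1} (matching obstructions (B) and (C) of Question~\ref{conj:general}) and use nothing about $K$ beyond characteristic $0$.

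The substantive direction is $(1) \Rightarrow (2)$, which I would prove by induction on $\dim V = \trdeg_{\Qbar} K$. The base case $\dim V = 1$ is exactly \cite[Theorem~1.1]{Quad1}. For $\dim V \geq 2$, the plan is to select a prime divisor $W$ on an appropriate projective model of $V$ such that (i) $c$ and $\beta$ extend across $W$; (ii) the residue $\overline{c}$ of $c$ satisfies $\overline{c} \in K(W) \setminus \Qbar$, so that $\overline{f}(x) = x^q + \overline{c}$ is non-isotrivial over $K(W)$; and (iii) the residue $\overline{\beta}$ is neither periodic nor postcritical for $\overline{f}$. If such a $W$ exists, the inductive hypothesis gives finite index of $G_\infty(\overline{f}, \overline{\beta})$ in $[C_q]^\infty$ over $K(W)$. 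To lift this back to $K$ I would use the divisorial valuation $v_W$: because $\Char K = 0$, the Kummer layers $K_n(\beta)/K_{n-1}(\beta) = K_{n-1}(\beta)\bigl(\sqrt[q]{\alpha - c} : \alpha \in f^{-(n-1)}(\beta)\bigr)$ are tame at $v_W$ and reduce cleanly to the corresponding layers over $K(W)$. Hence the decomposition subgroup of $v_W$ in $G_n(\beta)$ surjects onto $G_n(\overline{f}, \overline{\beta})$; since this happens uniformly in $n$, passing to the inverse limit gives $[G_\infty : G_\infty(\beta)] < \infty$.

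The main obstacle is the existence of a divisor $W$ meeting conditions (i)--(iii). The first two are Zariski-open on the parameter space of divisors, failing only on a proper closed subset. Condition (iii) is a countable conjunction: for each $n \geq 1$, the zero loci of $f^n(\beta) - \beta$ and of $f^n(0) - \beta$ are proper subvarieties of $V$ (by the hypothesis), and $W$ must not be contained in any codimension-$1$ component of any of them. Since $\Qbar$ is countable the set of divisors on $V$ defined over $\Qbar$ is itself countable, so one cannot always dodge a countable union of bad divisors directly. The cleanest workaround, which is what I would use, is to first base-change from $K$ to $K \otimes_{\Qbar} \C$: by regularity of $K/\Qbar$ this remains a function field, now over the uncountable constant field $\C$, and the iterated Galois groups $G_n(\beta)$ along with the non-periodicity and non-postcriticality hypotheses are preserved. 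Over $\C$ a generic choice of $W$ simultaneously avoids all countably many bad loci, the induction goes through, and the finite-index conclusion descends back to $K$ because the Galois groups on the two sides agree under the regular base change.
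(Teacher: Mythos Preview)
Your direction $(2)\Rightarrow(1)$ matches the paper (which cites \cite[Proposition~3.2]{Quad1}). For $(1)\Rightarrow(2)$ your route differs from the paper's: the paper does not induct on $\trdeg_{\Qbar} K$ but instead proves eventual stability (Theorem~\ref{stable-theorem}) and then applies \cite[Propositions~7.7 and~8.1]{Quad1} \emph{directly} to $K$---the key observation being that for a single map ($s=1$) those propositions hold verbatim for any function field over $\Qbar$, not just for curves, since they do not rely on the curve-specific material of \cite[Section~5]{Quad1}. With Proposition~\ref{indexprop} this forces each Kummer layer $\Gal(K_n(\beta)/K_{n-1}(\beta))$ to be maximal for all large $n$. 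Your idea, by contrast, is to specialize down to curves and quote the curve result.

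The specialization step itself (surjection from the decomposition group at $v_W$ onto the residual iterated Galois group, given (i)--(iii)) is sound; the gap is in arriving at the base case. You set up the induction over $\Qbar$ with base case \cite[Theorem~1.1]{Quad1}, but base-change to $\C$ to find $W$ avoiding the countably many bad loci. Once over $\C$, the residue field $K(W)$ is a function field over $\C$, not $\Qbar$, so your inductive hypothesis does not apply; running the whole induction over $\C$ instead requires the curve case over $\C$, which \cite{Quad1} does not prove. Descending the specialized data to a finitely generated $k_0\subset\C$ does not rescue this either, since $k_0$ typically has positive transcendence degree over $\Qbar$, so viewed over $\Qbar$ you have not actually dropped dimension. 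The gap is patchable---either verify that the \cite{Quad1} curve argument works over any algebraically closed constant field of characteristic $0$, or find $W$ over $\Qbar$ directly via a height argument in the spirit of Lemmas~\ref{lem:0spec}--\ref{lem:specialization2}---but neither patch is supplied. (Minor: $K\otimes_{\Qbar}\C$ is not a field, e.g.\ $t-\pi$ is not invertible in $\Qbar(t)\otimes_{\Qbar}\C$; you want its fraction field $\C(V)$.)
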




It is fairly easy to see that the conditions 
on $\beta$ in Theorem~\ref{p-theorem} are necessary (see \cite[Proposition~3.2]{Quad1}); so, the entire difficulty lies in  showing that these conditions 
are sufficient. Also, we note that the isotrivial case (i.e., $c\in\Qbar$) follows verbatim using the proof from \cite[Section~10]{Quad1}.


As proven in \cite{Quad1}, one of the key steps in the proof of Theorem~\ref{p-theorem} 
is an eventual stability result. As is usual in arithmetic dynamics, we say that the pair $(f,\beta)$ is eventually stable over the field $K$ if 
the number of irreducible $K$-factors of $f^n(x)-\beta$ is uniformly bounded for all $n$. The following result extends \cite[Theorem~1.3]{Quad1} to function fields of arbitrary transcendence degree.

\begin{thm}\label{stable-theorem}
Let $K$ be the function field of a smooth, projective variety defined over $\Qbar$.  Let $q = p^{r}$  ($r\ge 1$) be a power of the prime number $p$.  Let
  $f \in K[x]$ be a polynomial of the form $x^q + c$ where $c\notin \Qbar$. 
  Then for any non-periodic $\beta \in K$, the
  pair $(f,\beta)$ is eventually stable over $K$.  
\end{thm}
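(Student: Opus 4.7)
My plan is to reduce Theorem~\ref{stable-theorem} to the curve case already established in \cite[Theorem~1.3]{Quad1}, by re-expressing $K$ as the function field of a curve over a suitable auxiliary field of characteristic zero, and then adapting the proof of that curve case to this larger base field.

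Concretely, let $d:=\trdeg_{\Qbar}(K)$; the case $d=1$ is exactly \cite[Theorem~1.3]{Quad1}, so assume $d\ge 2$. Since $c\notin\Qbar$, I would extend $\{c\}$ to a transcendence basis $\{c,t_1,\dots,t_{d-1}\}$ of $K$ over $\Qbar$. Setting $L:=\Qbar(t_1,\dots,t_{d-1})$, the extension $K/L$ is finitely generated of transcendence degree one, so after taking the integral closure and projectivizing, $K$ is the function field of a smooth projective geometrically integral curve $C$ defined over $L$. By the choice of the transcendence basis, $c\notin L$, so $c$ is non-constant on $C$; this is the analogue of the non-isotriviality hypothesis of \cite[Theorem~1.3]{Quad1}, now with $L$ in place of $\Qbar$. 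The non-periodicity of $\beta$ is a property of $\beta\in K$ which is unaffected by this reinterpretation.

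I would then run the argument of \cite[Theorem~1.3]{Quad1}, with the constant field $L$ in place of $\Qbar$. That argument proceeds by choosing a place $v$ of the curve $C$ at which $c$ has a pole (such a place exists because $c\notin L$) and by performing a Newton polygon analysis on the iterates $f^n(x)-\beta$ over the completion $K_v$, yielding a uniform bound on the number of $K_v$-irreducible factors and hence on the number of $K$-irreducible factors of $f^n(x)-\beta$ as $n$ varies. The tools used in this analysis — existence of a place at which a non-constant function has a pole, Newton polygons over a complete discretely valued field of residue characteristic zero, and factor-counting estimates via local fields — depend only on the characteristic of the constant field being zero, not on any finer arithmetic property of $\Qbar$. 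The main obstacle is therefore purely technical: one must inspect the proof of \cite[Theorem~1.3]{Quad1} and verify that each step carries over to an arbitrary characteristic-zero constant field, replacing any appeal to $\Qbar$-specific inputs (such as $\Qbar$-rationality of preperiodic points or specific root-of-unity calculations) by analogues over $L$ or $\bar L$. Once this adaptation is made, eventual stability of $(f,\beta)$ over $K$ follows immediately.
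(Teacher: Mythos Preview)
Your high-level idea — write $K$ as the function field of a curve over a subfield in which $c$ is not a constant — is exactly the move the paper makes. The divergence is in the choice of that subfield, and this is not a merely cosmetic difference.

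You take $L=\Qbar(t_1,\dots,t_{d-1})$, a field containing $\Qbar$, and assert that the proof of \cite[Theorem~1.3]{Quad1} goes through over any characteristic-zero constant field, because it is ``Newton polygon analysis at a pole of $c$''. That description of the curve-case proof is not accurate. As the present paper's Proposition~\ref{stable1} makes explicit, the eventual-stability argument in \cite{Quad1} is not a single Newton-polygon computation at a pole of $c$; the hard case (when $\beta$ is algebraic over the field generated by $c$) is handled by reducing to \cite[Proposition~6.3]{Quad1}, which is stated and proved for function fields of transcendence degree one over a \emph{number field}. That reduction explicitly uses that the constant field is finitely generated over $\Q$, so that $K\cap\overline{\Q(c,\beta)}$ is a finite extension of $\Q(c,\beta)$. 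Your base field $L=\Qbar(t_1,\dots,t_{d-1})$ is not finitely generated over $\Q$, and there is no analogue of ``places of residue characteristic $p$'' over it, so the number-field input of \cite[Proposition~6.3]{Quad1} has no obvious substitute. Calling this obstacle ``purely technical'' understates it: the entire content of the paper's proof lies precisely in circumventing it.

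The paper's route is genuinely different. It chooses instead a \emph{finitely generated} subfield $k$ with $\trdeg_kK=1$ and $c\notin\bar k$, so that Proposition~\ref{stable1} applies and gives eventual stability over $L:=k(C)$. This buys access to the number-field case of \cite{Quad1}, but at the cost of a second step: since $K=\Qbar\cdot L$, one must still show that passing from $L$ to $K$ does not introduce unboundedly many new factors, i.e.\ that $\Qbar\cap\bigcup_n L(f^{-n}(\beta))$ is finite over $\Q$. This is done via the ramification criterion of \cite[Propositions~7.7 and~8.1]{Quad1} (the $s=1$ case, which avoids \cite[Section~5]{Quad1}): for large $n$ the extension $L_n(\beta)/L_{n-1}(\beta)$ admits no nontrivial unramified subextensions, hence no new constants can appear. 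This second step is exactly what your approach tries to sidestep by enlarging the constant field from the outset — but doing so costs you the arithmetic input that makes the first step work.
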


We also prove the following disjointness theorem for fields generated
by inverse images of different points under different maps; our result is a generalization of \cite[Theorem~1.4]{Quad1}. 

\begin{thm}\label{disjoint-theorem}
  Let $K$ be the function field of a smooth, projective variety defined over $\Qbar$.    
  For  $i =1, \dots, n$ let $f_i(x) = x^q + c_i \in K[x]$, where
  $c_i\notin \Qbar$, and let $\alpha_i\in K$.  Suppose that there are no
  distinct $i, j$ with the property that $(\alpha_i, \alpha_j)$ lies on a curve in $\A^2$
  that is periodic under the action of $(x,y) \mapsto (f_i(x),
  f_j(y))$.  For each $i$, let $M_i$ denote 
  $K_\infty(f_i,\alpha_i)$.  Then for each $i=1,\dots, n$, we have that 
\[ \left[M_i \cap \left(\prod_{j \ne i} M_j\right): K\right] < \infty .\]
\end{thm}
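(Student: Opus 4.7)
The plan is to combine Theorem \ref{p-theorem} with a Goursat-type comparison of the iterated Galois groups, reducing the disjointness statement to an assertion about common abelian subquotients that is ruled out by a Medvedev--Scanlon style analysis of periodic curves for the product map $(f_i,f_j)$. The jump from curves to higher-dimensional $V$ is then handled by specialization onto a generic curve $C\subset V$, where \cite[Theorem~1.4]{Quad1} applies directly.

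First I would use the hypothesis to verify the input assumptions of Theorem \ref{p-theorem} for each $(f_i,\alpha_i)$. If some $\alpha_i$ were $f_i$-periodic of period $m$, then the line $\{x=\alpha_i\}\subset\A^2$ would be invariant under $(f_i,f_j)^m$ and would pass through $(\alpha_i,\alpha_j)$ for every $j\ne i$, violating the hypothesis; the case when $\alpha_i$ is postcritical for $f_i$ can be removed by pulling back $\alpha_i$ through the (finite) initial critical orbit, at the cost of replacing $K$ by a finite extension. Theorem \ref{p-theorem} then gives that each $\Gal(M_i/K)$ has finite index in $G_\infty(f_i)\cong[C_q]^\infty$.

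Next I would run the Goursat argument. Fix $i$ and set $N_i=\prod_{j\ne i}M_j$; it suffices to prove $[M_i\cap N_i:K]<\infty$. Since $q=p^r$, both $M_i/K$ and $N_i/K$ have pro-$p$ Galois groups (closed subgroups of pro-$p$ groups via the embedding into products of copies of $[C_q]^\infty$), so an infinite intersection would force an infinite common abelian subquotient of $\Gal(M_i/K)$ and $\Gal(N_i/K)$. Via the Kummer-theoretic description of the abelianization of $[C_q]^\infty$ in characteristic $0$, such an infinite common quotient would translate into an infinite family of multiplicative relations, modulo $q$-th powers in a finite extension of $K$, among elements built from the $f_i$-backwards orbit of $\alpha_i$ and the $f_j$-backwards orbits of the $\alpha_j$. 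A relation of this form produces an algebraic curve in $\A^2$ passing through $(\alpha_i,\alpha_j)$ for some $j\ne i$ and invariant under an iterate of $(f_i,f_j)$; a Medvedev--Scanlon analysis of periodic subvarieties for the disintegrated product $(f_i,f_j)$ then delivers a curve contradicting the hypothesis.

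The main obstacle is carrying out the Kummer-theoretic step over $K=\Qbar(V)$ when $\dim V>1$, because $K^{\times}/(K^{\times})^q$ is substantially larger than in the curve case of \cite{Quad1} and the geometric content of a multiplicative relation is harder to read off directly. The cleanest route is by specialization: one selects a smooth, irreducible, projective curve $C\subset V$ in sufficiently general position so that (i) each $c_i$ restricts to a nonconstant element of $\Qbar(C)$; (ii) the no-periodic-curve hypothesis is preserved on the fibre $C$, which is verified by applying a Bertini-type argument to the Zariski closure in $V\times\A^2$ of any hypothetical obstruction; and (iii) the level-$n$ iterated Galois groups over $\Qbar(C)$ coincide with those over $K$, up to bounded index uniformly in $n$ (using Hilbert irreducibility together with the eventual stability provided by Theorem \ref{stable-theorem}). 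Once these are arranged, \cite[Theorem~1.4]{Quad1} applied over $\Qbar(C)$, combined with the finite-index bound of Theorem \ref{p-theorem} over $K$ itself, yields $[M_i\cap N_i:K]<\infty$ and completes the proof.
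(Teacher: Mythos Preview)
Your final paragraph lands on the same strategy as the paper --- specialize to reduce to the transcendence-degree-one case handled by \cite[Theorem~1.4]{Quad1} --- but the execution has a real gap, and the paper's version differs in two ways that are not cosmetic.

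First, the paper does not restrict in one step to a curve $C\subset V$. A curve of codimension larger than one does not correspond to a place of $K=\Qbar(V)$, so there is no specialization homomorphism against which to compare Galois groups. Instead the paper writes $K$ as a tower $L_0\subset L_1\subset\cdots\subset L_m=K$ with each $L_i/L_{i-1}$ of relative transcendence degree one, and specializes one step at a time at a place of the top layer; this is exactly what makes the elementary inequality $[L_2:L_1]\le[K_2:K_1]$ available, which is all that is needed for your point (iii). You do not need Hilbert irreducibility or a two-sided bound on the index: once the specialized group at level $n$ is the maximal $C_q^{sq^{n-1}}$, the original group must be too.

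Second, and more seriously, your verification of (ii) does not go through as written. By Medvedev--Scanlon (which the paper also invokes), the periodic curves for $(f_i,f_j)$ that project dominantly to both factors are exactly the graphs $y=\zeta f_i^{\,n}(x)$ or $x=\zeta^{-1}f_j^{\,n}(y)$ with $c_j=\zeta c_i$, one for each $n\ge 0$. Thus preserving the hypothesis under specialization amounts to avoiding an \emph{infinite} list of equalities $\alpha_j(\gamma)=\zeta f_{i,\gamma}^{\,n}(\alpha_i(\gamma))$. A Bertini-type argument disposes of any finite such list, but over the countable field $\Qbar$ you cannot simply discard a countable union of proper closed loci. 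The paper's replacement for your Bertini step is a quantitative height argument using \cite{CallSilverman} and \cite{Ben1}: one shows that for $h_C(\gamma)$ sufficiently large, an equality $f_\gamma^{\,n}(\beta_1(\gamma))=\beta_2(\gamma)$ forces either $n$ to be bounded (so only finitely many bad $\gamma$ remain), or, in the residual case where $c,\beta_1\in\Lbar$ but $\beta_2\notin\Lbar$, forces $h_C(\gamma)$ to lie in one of the sparse intervals $[C_8q^n-C_9,\,C_8q^n+C_9]$. One then picks $\gamma$ of large height outside all such intervals. This is the missing ingredient in your sketch.

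Finally, the Goursat/Kummer route you outline in your second paragraph is not the paper's argument, and your own third paragraph effectively abandons it. As stated it is also not correct: a common infinite pro-$p$ quotient need not have infinite abelianization (e.g.\ $\Z_p$), so ``infinite intersection forces an infinite common abelian subquotient'' requires more justification than you give.
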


Theorem~\ref{disjoint-theorem} coupled with Theorem~\ref{p-theorem} yields the following special case of Question~\ref{conj:general}.

\begin{thm}
\label{thm:P1n}
Let $K$ be the function field of a smooth, projective variety defined over $\Qbar$, let $q$ be a power of the prime number $p$, and let $m$ be a positive integer.    
  For  $i =1, \dots, m$ let $f_i(x) = x^q + c_i \in K[x]$, where
  $c_i\notin \Qbar$, and let $\alpha_i\in K$. We let $\underline{\alpha}:=(\alpha_1,\dots, \alpha_m)$ and let $\Phi:=(f_1,\dots, f_m)$ acting on $X:=(\bP^1)^m$. Then let   $G_n(\Phi,\underline{\alpha})$ be the Galois group of
$K(\Phi^{-n}(\underline{\alpha}))$ over $K$.  We let $G_{\infty}(\underline{\alpha}):=G_\infty(\Phi,\underline{\alpha})$ be the inverse limit of the
groups $G_n(\Phi,\underline{\alpha})$. 

Then at least one of the following must hold:
\begin{itemize}
\item[(A)] $\left[G_\infty(\Phi,X): G_\infty(\Phi,\underline{\alpha})\right]$ is finite;
\item[(B)] $\underline{\alpha}$ lies in the orbit of a
point in the ramification locus of $\Phi$; or 
\item[(C)] $\underline{\alpha}$ lies on a proper subvariety $Y\subset X$ that is invariant under a non-identity self-map $\Psi: X \lra X$ with the property that $\Phi\circ \Psi =\Psi\circ\Phi$.
\end{itemize}
\end{thm}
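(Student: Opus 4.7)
The plan is to reduce Theorem~\ref{thm:P1n} to the single-coordinate case handled by Theorems~\ref{p-theorem} and~\ref{disjoint-theorem}, after first identifying $G_\infty(\Phi,X)$ and separating out the exceptional configurations.

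First I would identify $G_\infty(\Phi,X)$. Since $\Phi$ acts coordinate-wise on $X=(\bP^1)^m$, the Galois closure of $K(X)=K(x_1,\dots,x_m)$ over $\Phi^{*n}(K(X))$ is the compositum of the splitting fields of the polynomials $f_i^n(T_i)-f_i^n(x_i)$ for $i=1,\dots,m$. Each such splitting field is algebraic over $K(x_i)$ while the remaining $x_j$'s are transcendental, so the splitting fields are pairwise linearly disjoint over $K(x_1,\dots,x_m)$. Taking the inverse limit gives
\[
G_\infty(\Phi,X)\ \cong\ \prod_{i=1}^m G_\infty(f_i,\bP^1).
\]

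Next, I would dispose of the exceptional configurations. If some $\alpha_i$ is periodic of period $n$ for $f_i$, the hypersurface $Y=\{x\in X:f_i^n(x_i)=x_i\}$ is a proper $\Phi$-invariant subvariety through $\underline{\alpha}$, and $\Psi=\Phi$ (non-identity, trivially commuting with itself) gives conclusion~(C). If some $\alpha_i=f_i^k(0)$ is postcritical ($k\ge 1$), then since the Jacobian of $\Phi$ is diagonal with entries $qx_i^{q-1}$, the ramification locus of $\Phi$ is $\bigcup_{i}\{x_i=0\}$; picking any $\beta_j\in f_j^{-k}(\alpha_j)$ for $j\ne i$ and placing $0$ in the $i$-th slot yields a ramification point $\gamma$ with $\Phi^k(\gamma)=\underline{\alpha}$, giving~(B). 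If some pair $(\alpha_i,\alpha_j)$ lies on an irreducible curve $C\subset\A^2$ periodic under $(f_i,f_j)$, then $Y=\bigcup_{k\ge0}\Phi^k(\pi_{ij}^{-1}(C))$ is a finite union, hence a proper $\Phi$-invariant subvariety of $X$ containing $\underline{\alpha}$, and~(C) again holds with $\Psi=\Phi$.

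In the remaining case every $\alpha_i$ is non-periodic and non-postcritical for $f_i$, and the hypothesis of Theorem~\ref{disjoint-theorem} is satisfied. Theorem~\ref{p-theorem} yields $[G_\infty(f_i,\bP^1):G_\infty(f_i,\alpha_i)]<\infty$ for each $i$, and Theorem~\ref{disjoint-theorem} yields $[M_i\cap\prod_{j\ne i}M_j:K]<\infty$ for $M_i=K_\infty(f_i,\alpha_i)$. Using the standard fact that for Galois extensions $L_1,L_2/K$ the image of $\Gal(L_1L_2/K)\hookrightarrow\Gal(L_1/K)\times\Gal(L_2/K)$ has index $[L_1\cap L_2:K]$, an induction on $m$ shows that $G_\infty(\Phi,\underline{\alpha})$ has finite index in $\prod_i G_\infty(f_i,\alpha_i)$; combined with the identification of $G_\infty(\Phi,X)$ above and with Theorem~\ref{p-theorem}, this gives~(A). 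The substantive content sits in Theorems~\ref{p-theorem} and~\ref{disjoint-theorem}, which are assumed here; the only delicate point in the present argument is verifying that the $M_i$ are linearly disjoint enough for the compositum-index induction to close, which is exactly what Theorem~\ref{disjoint-theorem} provides.
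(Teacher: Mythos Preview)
Your proposal is correct and follows essentially the same route as the paper's own proof: assume (B) and (C) fail, deduce that each $\alpha_i$ is non-periodic and non-postcritical and that no pair $(\alpha_i,\alpha_j)$ lies on a periodic curve, then invoke Theorems~\ref{p-theorem} and~\ref{disjoint-theorem} to conclude (A). You are simply more explicit than the paper in two places: you spell out the identification $G_\infty(\Phi,X)\cong\prod_i G_\infty(f_i,\bP^1)$ via linear disjointness of the coordinate towers, and you actually exhibit the subvariety $Y$ (respectively the ramification preimage) witnessing (C) (respectively (B)) in the exceptional cases, whereas the paper treats these implications as evident.
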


Finally, using a similar strategy as employed in \cite{BT2}, we obtain the following result regarding the arboreal Galois representation associated to cubic polynomials. Once again, we use the notation from Question~\ref{conj:general} for $G_\infty$ and $G_\infty(x)$ and since both groups lie naturally inside $\Aut(T^3_\infty)$, then in order to prove the finiteness of the index of $G_\infty(x)$ inside $G_\infty$, it suffices to prove $[\Aut(T^3_\infty):G_\infty(x)]<\infty$.

\begin{thm}\label{cubic}
  Let $K$ be the function field of a smooth, projective variety
  defined over $\Qbar$.  Let $f \in K[x]$ be a cubic polynomial.
  Assume that $f$ is not isotrivial over
  $\Qbar$, that $\beta$ is not periodic or postcritical for $f$, that
  the pair $(f,\beta)$ is eventually stable, and that $f$ has distinct
  finite critical points $\gamma_1$, and $\gamma_2$, and
  $f^n(\gamma_1)\neq f^n(\gamma_2)$ for all $n\geq 1$.  Then
$$[\Aut(T^3_\infty):G_\infty(\beta)]<\infty.$$
\end{thm}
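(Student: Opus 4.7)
\emph{Plan.} The plan is to adapt the valuation-theoretic approach of \cite{BT2} for cubic polynomials over function fields of curves to the setting where $K$ is the function field of a higher-dimensional variety $V$ over $\Qbar$. The aim is to show that for all but finitely many $n$, the extension $K_n(\beta)/K_{n-1}(\beta)$ realizes its maximum possible Galois group $S_3^{3^{n-1}}$ inside $\Aut(T^3_\infty)$; combined with the eventual stability hypothesis, this yields $[\Aut(T^3_\infty):G_\infty(\beta)]<\infty$.

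First, I would use the eventual stability of $(f,\beta)$ to pass, after replacing $K$ by a finite extension and $\beta$ by some preimage $\beta'\in f^{-N}(\beta)$, to a situation in which $f^n(x)-\beta'$ is $K$-irreducible for every $n\geq 0$; this does not affect the index under consideration. From the factorization
\[
  f^n(x)-\beta' \;=\; c_n\prod_{\delta\in f^{-(n-1)}(\beta')}\bigl(f(x)-\delta\bigr),
\]
the maximality of $\Gal(K_n/K_{n-1})$ as a subgroup of $S_3^{3^{n-1}}$ reduces, for each preimage $\delta$, to two conditions: (i) the discriminant $\Delta_\delta$, which up to squares equals $(f(\gamma_1)-\delta)(f(\gamma_2)-\delta)$, is a non-square in $K_{n-1}$, ensuring the Galois group of $f(x)-\delta$ is $S_3$ rather than $A_3$; and (ii) the quadratic subextensions $K_{n-1}(\sqrt{\Delta_\delta})$ are pairwise linearly disjoint as $\delta$ varies.

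Both conditions are established by a place-theoretic induction. By non-isotriviality of $f$, there is a prime divisor $D$ on $V$ whose associated place $v$ of $K$ witnesses nontrivial valuation of some affine-invariant quantity of $f$---such as $\gamma_1-\gamma_2$ or a suitable coefficient after conjugation. I would then inductively produce, at each level $n$, a place $w$ of $K_{n-1}$ above $v$ at which $\ord_w\Delta_\delta$ is odd for every $\delta\in f^{-(n-1)}(\beta')$, thereby precluding $\Delta_\delta$ from being a square. The hypothesis $f^n(\gamma_1)\neq f^n(\gamma_2)$ for all $n\geq 1$ enters here to prevent the two factors of $\Delta_\delta$ from producing cancelling odd valuations, while non-periodicity and non-postcriticality of $\beta$ guarantee that distinct preimages $\delta$ yield independent square classes at $w$, giving (ii).

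The main obstacle is that the valuation arithmetic in \cite{BT2} is performed over a DVR whose residue field is a finite extension of $\Qbar$, whereas here the residue field $k_v$ is itself a function field of positive transcendence degree over $\Qbar$. One must verify that the ramification and residue-field behavior in the tower above $v$ matches the curve case---this should follow because the key inputs of \cite{BT2} (discreteness of $v$, controlled valuations of critical differences under iteration of $f$, and preservation under finite extensions) are equally available for divisorial valuations on $V$. Should any step genuinely require a 1-dimensional base, the backup is to iteratively cut $V$ by generic hyperplane sections, descending to a curve while checking that the Zariski-open hypotheses of the theorem (non-isotriviality, distinct critical orbits, non-periodicity of $\beta$, and---most subtly---eventual stability) persist at each stage, and then lift the finite-index conclusion from the curve back to $V$ via the embedding of the residual arboreal group into the generic one.
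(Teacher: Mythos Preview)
Your approach differs substantially from the paper's. The paper does not attempt to run the valuation/discriminant induction of \cite{BT2} directly over a higher-dimensional $V$; instead it specializes down to the curve case at once and invokes \cite[Theorem~1.1]{BT2} as a black box. Concretely: after conjugating $f$ to $x^3+bx+a$ and replacing $K$ by a finite extension of $\Q(a,b,\beta)$, the cases $\trdeg_\Q K=1$ and $\beta$ transcendental over $\Q(a,b)$ are already in \cite{BT2}. In the remaining case ($a,b$ algebraically independent, $\beta$ algebraic over $\Q(a,b)$) the paper specializes $a\mapsto G(b)$ for a carefully chosen $G\in\Z[b]$, checks that the specialized pair $(f_t,\beta_t)$ over $\Q(b)$ satisfies all hypotheses of \cite[Theorem~1.1]{BT2}, and concludes via $[K_n(\beta):K]\ge [k_t(f_t^{-n}(\beta_t)):k_t]$.

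Your primary plan---lifting the odd-valuation discriminant argument to divisorial places on $V$---is plausible but you have not resolved the obstacle you yourself flag, so as written it is not a proof. More importantly, your backup (generic hyperplane cuts) is in spirit what the paper does, but you call preserving eventual stability ``most subtle'' and then say nothing about how to do it. That is exactly where the paper's content lies: eventual stability is not generically preserved under specialization, so the paper does \emph{not} take a generic hyperplane. It takes $G$ to be an even polynomial with positive integer coefficients, so that a \emph{further} specialization $b\mapsto 3m\in\Z$ gives $x^3+3mx+n$ over $\Z$, which is eventually stable by \cite{RafeAlon}; hence $f_t$ is eventually stable over $\Q(b)$. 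The same specific shape of $G$ lets one verify $f_t^i((\gamma_1)_t)\ne f_t^i((\gamma_2)_t)$ by a sign argument after sending $b\mapsto -3e^2$. The remaining hypotheses (non-periodicity and non-postcriticality of $\beta_t$, non-preperiodicity of the $(\gamma_i)_t$) are handled by Call--Silverman specialization of canonical heights and by taking $\deg G$ large. Without a device of this kind for eventual stability, your backup plan has a genuine gap.
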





Our proofs rely mainly on specialization techniques in order to extend the results of \cite{Quad1} and \cite{BT2} to the generality from the current article. Actually, considering the extension of our results from \cite{Quad1} to arbitrary function fields led us to formulating the general Question~\ref{conj:general}.


\subsection{The case of higher dimensional varieties}

Question~\ref{conj:general} was also motivated by the case of the multiplication-by-$m$ maps $\Phi$ (for some integer $m>1$) on abelian varieties $X$. In that case, the conclusion of Question~\ref{conj:general} is known due to Ribet's work \cite{Ribet} (who generalizes results of Bachmakov \cite{Bachmakov}); as long as the point $x\in X$ is not torsion, we know that the index $[G_\infty:G_\infty(x)]$ is finite. Actually, the first result in this direction was the case of the monomial map $\Phi(z):=z^m$ (for integer $m>1$) acting on $\bP^1$, in which case, the conclusion in Question~\ref{conj:general} reduces to the classical theory of Kummerian extensions. 

Similarly, due to work of Pink \cite{Pink-Kummer}, one establishes also the conclusion of Question~\ref{conj:general} in the special case of Drinfeld modules, i.e., if $\Phi$ is a separable additive polynomial (which is, therefore, an endomorphism of $\G_a$ defined over a field of characteristic $p$) of degree larger than $1$ and whose derivative is a transcendental element over $\F_p$. This justifies our belief that as long as $\Phi$ is separable, then Question~\ref{conj:general} should hold as well for finitely generated,  infinite fields of positive characteristic.

In Section~\ref{sec:higher} we present additional evidence supporting our Question~\ref{conj:general}. 


\medskip

{\bf Acknowledgments.} D.G. was partially supported by an NSERC Discovery grant. L.-C. H. was partially supported by 
MOST Grant 108-2115-M-003-005-MY2. 


\section{Wreath products}\label{wreath}


In this section (which overlaps with \cite[Section~2]{Quad1}), we give a brief introduction to wreath products, which arise naturally  
from the Galois theory of the preimage fields $K_n(\beta)=K(f^{-n}(\beta))$.

Let $G$ be a permutation group acting on a set $X$, 
and let $H$ be any group. Let $H^X$ be the group of functions from $X$ to $H$ 
with multiplication defined pointwise, or equivalently the direct product of $|X|$ copies of $H$. 
The wreath product of $G$ by $H$ is the semidirect 
product $H^X\rtimes G$, where $G$ acts on $H^X$ 
by permuting coordinates: for $f\in H^X$ and $g\in G$ we have \[f^g(x)=f(g^{-1}x)\]
for each $x\in X$. We will use the notation $G[H]$ for the wreath product, suppressing the set $X$ 
in the notation. (Another common convention is $H\wr G$ or $H\wr_X G$ if we wish to call attention to $X$.)

Fix an integer $d\geq 2$. For $n\geq 1$, let $T^d_n$ be the complete rooted $d$-ary tree of level $n$. 
It is easy to see that $\Aut(T^d_1)\cong S_d$, and standard to show that $\Aut(T^d_n)$ 
satisfies the recursive formula
\[\Aut(T^d_n)\cong \Aut(T^d_{n-1})[S_d].\]
Therefore we may think of $\Aut(T^d_n)$ as the ``$n$th iterated wreath product" of $S_d$, which we will 
denote $[S_d]^n$. In general, for $f\in K[x]$ of degree $d$ and $\beta\in K$, the Galois group $G_n(\beta)=\Gal(K_n(\beta)/K)$ 
embeds into $[S_d]^n$ via the faithful action of $G_n(\beta)$ on the $n$th level of the tree of preimages of $\beta$ 
(see for example~\cite{OdoniIterates} or~\cite[Section 2]{BT2}). 

Assume now that $f(x):=x^d+c\in K[x]$, where $K$ is a field of characteristic $0$ that contains the $d$-th roots of unity. 
For $\beta\in K$ such that $\beta-c$ is not a $d$th power in $K$, 
we have $K_1(\beta)=K((\beta-c)^{1/d})$ and $G_1(\beta)\cong C_d$. For any $n\geq 2$, 
the extension $K_{n}(\beta)$ is a Kummer extension attained by adjoining to $K_{n-1}(\beta)$ the 
$d$-th roots of $z-c$ where $z$ ranges over the roots of $f^{n-1}(x)=\beta$. Thus we have
\begin{equation*}
\Gal(K_n(\beta)/K_{n-1}(\beta))\subseteq \prod_{f^{n-1}(z)=\beta} \Gal(K_{n-1}(\beta)((z-c)^{1/d})/K_{n-1}(\beta))\subseteq C_d^{d^{n-1}}.
\end{equation*}
This is clear if $f^{n-1}(x)-\beta$ has distinct roots in $\Kbar$. 
If $f^{n-1}(x)-\beta$ has repeated roots, then $\Gal(K_n(\beta)/K_{n-1}(\beta))$ sits inside a direct product of a 
smaller number of copies of $C_d$, so the stated containments still hold. 

Considering the Galois tower
\[ K_n(\beta) \supseteq K_{n-1}(\beta)\supseteq K\]
we see that 
\[G_n(\beta)\subseteq \Gal(K_n(\beta)/K_{n-1}(\beta)) \rtimes G_{n-1}(\beta) \cong G_{n-1}(\beta)[C_d],\]
where the implied permutation action of $G_{n-1}(\beta)$ is on the set of roots of $f^{n-1}(x)-\beta$. By induction,
$G_n(\beta)$ embeds into $[C_d]^n$, the $n$-th iterated wreath product of $C_d$. 
Observe that $[C_d]^n$ sits as a subgroup of $\Aut(T^d_n)\cong[S_d]^n$ via the obvious action on the tree. 
Taking inverse limits, $G_\infty(\beta)$ embeds into $[C_d]^\infty$, which sits as a subgroup of $\Aut(T_\infty)$. 

We summarize our basic strategy for proving that $G_\infty(\beta)$ has finite or infinite index in $[C_d]^\infty$ 
as Proposition~\ref{indexprop} (whose proof is identical with the one for \cite[Proposition~2.1]{Quad1}).

\begin{prop}\label{indexprop}
Let $f=x^d+c\in K[x]$. Then $[[C_d]^\infty:G_\infty(\beta)]<\infty$ if and only if $\Gal(K_n(\beta)/K_{n-1}(\beta))\cong C_d^{d^{n-1}}$ 
for all sufficiently large $n$.
\end{prop}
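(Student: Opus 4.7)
The plan is to analyze the indices at each finite level using the natural short exact sequence structure of the iterated wreath product, and then pass to the inverse limit. Write $W_n := [C_d]^n$ and $H_n := G_n(\beta) \subseteq W_n$, and let $\pi_n : W_{n+1} \to W_n$ denote the quotient map whose kernel is the base-level copy of $C_d^{d^n}$. Two facts, both built into the embedding $H_n \hookrightarrow W_n$ set up in Section~\ref{wreath}, drive everything: $\pi_n(H_{n+1}) = H_n$ (this is restriction of Galois automorphisms from $K_{n+1}(\beta)$ to $K_n(\beta)$), and $H_{n+1} \cap \ker \pi_n = \Gal(K_{n+1}(\beta)/K_n(\beta))$.

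From these I would derive the index recursion
\[
[W_{n+1} : H_{n+1}] \;=\; [W_n : H_n] \cdot \bigl[C_d^{d^n} : \Gal(K_{n+1}(\beta)/K_n(\beta))\bigr],
\]
by factoring $[W_{n+1} : H_{n+1}] = [W_{n+1} : \pi_n^{-1}(H_n)] \cdot [\pi_n^{-1}(H_n) : H_{n+1}]$ and recognizing the first factor as $[W_n : H_n]$ (since $\pi_n$ is surjective with kernel contained in $\pi_n^{-1}(H_n)$) and the second as $[\ker \pi_n : H_{n+1} \cap \ker \pi_n]$ (a standard second-isomorphism-theorem step). Consequently $\{[W_n : H_n]\}_{n \geq 1}$ is non-decreasing, and it strictly increases between steps $n$ and $n+1$ precisely when $\Gal(K_{n+1}(\beta)/K_n(\beta))$ is a proper subgroup of $C_d^{d^n}$.

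Next I would pass to inverse limits. Since the projection $W_\infty \to W_n$ sends $H_\infty$ onto $H_n$, one has $[W_n : H_n] \leq [W_\infty : H_\infty]$ for every $n$; a standard profinite argument identifying $W_\infty/H_\infty$ with $\varprojlim W_n/H_n$ then yields $[W_\infty : H_\infty] = \sup_n [W_n : H_n]$. Both directions of the proposition now fall out of the recursion. If $[W_\infty : H_\infty] < \infty$, the monotone sequence $[W_n : H_n]$ is bounded hence eventually constant, so the recursion forces $\Gal(K_{n+1}(\beta)/K_n(\beta)) = C_d^{d^n}$ for all sufficiently large $n$. Conversely, if this equality holds for all $n \geq N$, then $[W_n : H_n] = [W_N : H_N]$ for all $n \geq N$, and therefore $[W_\infty : H_\infty] = [W_N : H_N] < \infty$.

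There is no genuine obstacle here; the only point requiring care is the identification $H_{n+1} \cap \ker \pi_n = \Gal(K_{n+1}(\beta)/K_n(\beta))$ together with the surjectivity $\pi_n(H_{n+1}) = H_n$, and both are immediate from the construction of the wreath-product embedding via the Kummer tower recalled in Section~\ref{wreath}. As noted in the statement, the argument is essentially identical to that of \cite[Proposition~2.1]{Quad1}.
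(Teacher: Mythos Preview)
Your proposal is correct and follows exactly the standard index-recursion argument one expects here; since the paper does not reproduce its own proof but simply points to \cite[Proposition~2.1]{Quad1}, your write-up is presumably the same argument as that reference and certainly suffices.
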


\section{Heights}\label{heights}


In this section we setup the notation regarding heights. Since later we will need to consider function fields over arbitrary fields (see the proof of Theorem~\ref{disjoint-theorem}, for example), we will introduce the heights associated to  function fields in a general setting; for more details, see \cite{BG}. First, we recall the notation of $\log^+$: for each real number $z$, we have $\log^+|z|:=\log\max\{1,|z|\}$.

So, $K$ is the function field of a smooth, projective (irreducible) variety $V$ defined over a field $K_0$ (of characteristic $0$). As proven in \cite{BG}, there exists a set $\Omega_{V}$ of places of the function field $K/K_0$  associated to the codimension $1$ irreducible subvarieties of $V$; furthermore, there exist positive integers $n_v$ (for each $v\in\Omega_V$) such that the product formula holds for the nonzero elements $z\in K$:
\begin{equation}
\label{eq:prod formula}
\prod_{v\in\Omega_V} |z|_v^{n_v}=1.
\end{equation} 
Then we have the Weil height associated to the set of places in $\Omega_V$; for simplicity, we omit the variety $V$ from the notation for the Weil height and instead, we simply denote 
$$h_{K/K_0}(z)=\sum_{v\in\Omega_V}n_v\cdot \log^+|z|_v.$$
Naturally, the Weil height extends to all points in $\Kbar$ (see \cite{BG}).

We denote by $v(\cdot)$ the (exponential) valuation associated to each place in $\Omega_V$. 
For $f\in K[x]$ with $\deg f=d\geq 2$, 
let $\widehat{h}_f(z)$ be the Call-Silverman canonical height of $z$ relative to $f$~\cite{CallSilverman}, defined by
\[
\widehat{h}_f(z) = \lim_{n\to\infty}\frac{h_{K/K_0}(f^n(z))}{d^n}.
\]

%
%

Finally, we conclude this Section by observing that \cite[Lemmas~3.1~and~3.2]{Quad1} extend to the more general setting of arbitrary function fields with identical proofs.

\section{Eventual Stability} \label{stable}


In this section, we show that if $K$ is a function field over a
finitely generated field of characteristic $0$ and $f \in K[x]$ is a
non-isotrivial unicritical polynomial of degree equal to a prime power, then $f$ is eventually stable. Our results are an extension of the results obtained in \cite[Section~6]{Quad1}.

\begin{prop}\label{stable1}
Let $q=p^r$ (for $r\ge 1$) be a power of the prime number $p$ and let $K$ be a function field of transcendence degree $1$ over a finitely generated field $k$ of characteristic $0$. Let $f(x) = x^q + c \in K[x]$, where $c \notin
k$.  Then for any $\beta\in K$ that is not periodic under $f$, the pair
$(f, \beta)$ is eventually stable over $K$.
\end{prop}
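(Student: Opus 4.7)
The plan is to adapt \cite[Theorem~1.3]{Quad1}, which established the same conclusion over $k=\Qbar$, to the present setting. The argument is essentially local at one distinguished place of $K$; the role of $k$ is only to guarantee, via the hypothesis $c\notin k$, the existence of such a place. All global machinery (the places $\Omega_V$, the product formula, and the Weil and canonical heights recalled in Section~\ref{heights}) transfers without change to function fields over any finitely generated field of characteristic zero, so no conceptually new ingredient is needed.

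In the main case, in which $c$ is transcendental over $k$, $c$ is non-constant on the smooth projective curve $V$ with function field $K$, and so there is a place $v\in\Omega_V$ at which $c$ has a pole, say $v(c)=-m$ with $m\ge 1$. A short induction then gives $v(f^n(0))=-mq^{n-1}$ for every $n\ge 1$, so the critical orbit escapes at $v$. The key local claim, proved by a Newton polygon computation at $v$, is that if $\gamma\in\Kbar$ satisfies $w(\gamma)<w(c)$ and $p\nmid w(\gamma)$ for a suitably normalized extension $w$ of $v$, then $f(x)-\gamma$ is irreducible over the completion at $w$, and each root inherits the same conditions after renormalization; iterating, the pair $(f,\gamma)$ is stable. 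Hence as soon as some forward iterate $f^{n_0}(\beta)$ meets these hypotheses, $(f,\beta)$ is eventually stable.

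The main obstacle is the complementary case in which the forward orbit of $\beta$ stays bounded at $v$. Here I would combine the positivity of $\widehat{h}_f(0)$ (a consequence of the critical-orbit escape at $v$) with the non-periodicity of $\beta$ to produce a uniform bound on the number of Galois orbits on the preimage tree of $\beta$; this is the technical heart of \cite[Section~6]{Quad1} and should go through with only cosmetic modifications, since every ingredient — product formula, canonical heights, Galois action on preimage trees — remains available in the current generality. The residual case in which $c$ is algebraic over $k$ but not in $k$ is handled separately by passing to the algebraic closure of $k$ inside $K$ and reducing to the much simpler ``constant coefficient'' situation.
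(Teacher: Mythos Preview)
Your route is genuinely different from the paper's. You propose to rerun the local Newton--polygon and height arguments of \cite[Section~6]{Quad1} directly at a place of $K/k$ where $c$ has a pole, trusting that everything transfers from $k=\Qbar$ to an arbitrary finitely generated base. The paper instead performs a \emph{descent} and then invokes \cite[Proposition~6.3]{Quad1} as a black box: since every element of $f^{-n}(\beta)$ is algebraic over $\Q(c,\beta)$, the number of irreducible $K$-factors of $f^n(x)-\beta$ equals the number over $K\cap\overline{\Q(c,\beta)}$, and the hypothesis that $k$ is finitely generated forces this intersection to be a \emph{finite} extension of $\Q(c,\beta)$. If $\beta$ is transcendental over $\Q(c)$, the Newton polygon at the infinite place of $\Q(c,\beta)/\Q(c)$ already gives irreducibility of every $f^n(x)-\beta$; otherwise $\Q(c,\beta)$ is a finite extension of $\Q(c)$, which is exactly the setting (transcendence degree one over a number field) covered by \cite[Proposition~6.3]{Quad1}.

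What the descent buys is that one never has to reopen the ``complementary case'' you flag, in which the orbit of $\beta$ stays $v$-bounded. In \cite{Quad1} that case is handled using the prime $p$ dividing $q$ through places of the number-field base, and your proposal does not explain how those arguments survive when $k$ is merely finitely generated over $\Q$; ``should go through with only cosmetic modifications'' is precisely the unverified step. Relatedly, your claim that ``the role of $k$ is only to guarantee\ldots the existence of such a place'' understates matters: in the paper's argument the finitely generated hypothesis on $k$ is exactly what makes $K\cap\overline{\Q(c,\beta)}$ finite over $\Q(c,\beta)$, which is the entire descent. Finally, your residual case ``$c$ algebraic over $k$ but $c\notin k$'' does not arise here, since in the paper's setup $k$ is taken algebraically closed in $K$.
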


First, we note that since in our Theorem~\ref{stable-theorem} we work under the assumption that the polynomial $x^q+c$ is not defined over $\Qbar$, i.e., $c\notin\Qbar$, then we can readily choose some finitely generated field $k$ such that $K$ is the function field of a curve $C$ defined over $k$ and $c\notin k$ (which is equivalent with $c\notin\kbar$ since $k$ is algebraically closed in $K$).

\begin{proof}[Proof of Proposition~\ref{stable1}.] 
Since $K/k$ is a function field of transcendence degree $1$ and $c\notin k$, then $K$ must be a finite extension of $k(c)$ (and therefore, also a finite extension of $k(c,\beta)$).  Thus, the pair $(f, \beta)$ is eventually stable over
  $K$ if and only if it is eventually stable over $k(c,\beta)$; hence, from now on, 
  we may assume that $K = k(c,\beta)$.  Furthermore, since for each $n$, we have that $f^{-n}(\beta)$ is contained in some algebraic extension of
  $\bQ(c,\beta)$, then we may (and do) assume that $K$ is a finite 
  extension of $\Q(c, \beta)$; note that here we use in an essential way that $k$ is a finitely generated field and therefore, $K\cap \overline{\Q(c,\beta)}$ must be a finite extension of $\Q(c,\beta)$. 

  Now, if $\beta$ is not algebraic over $\Q(c)$, then
  $f^n(x) - \beta$ is easily seen to be irreducible over $\Q(c,\beta)$  
  for all $n$; for example, this follows immediately by looking at its
  Newton polygon at the place at infinity for the function field $\Q(c, \beta)/\Q(c)$.  This would imply that
  $f^n(x) - \beta$ has finitely many factors over $K$; hence, from now on, we may assume that $\Q(c,\beta)$ is an
  algebraic extension of $\Q(c)$.  But then we are back to the setting of \cite[Proposition~6.3]{Quad1} (i.e., we have a function field of transcendence degree one over a number field) and the result follows immediately. 
This concludes our proof of Proposition~\ref{stable1}. 
\end{proof}

\section{Proof of Main Theorems}\label{main proof}

\begin{proof}[Proof of Theorem \ref{stable-theorem}]
We know that $K$ is the function field of a projective, smooth, irreducible variety defined over $\Qbar$; also, we know that the number $c$ (where $f(x)=x^q+c$) is not contained in $\Qbar$. Thus there exists a finitely generated field $k$ such that
\begin{itemize}
\item[(A)] $\trdeg_kK=1$; and 
\item[(B)] $c\notin \kbar$.
\end{itemize}
Furthermore, at the expense of replacing both $k$ and $K$ by finite extensions, we may assume there exists a smooth, projective curve $C$ such that $c,\beta\in L:=k(C)$ and moreover, $K=\Qbar L$. 

By Proposition~\ref{stable1}, the pair $(f,\beta)$ is eventually
  stable over $L$.   It will suffice now to show that 
  $$\Qbar \cap \left(\bigcup_{n=1}^\infty k(f^{-n}(\beta))\right)\text{ is a finite extension of }\Q.$$ 
Since $(f,\beta)$ is eventually stable over $L$, by Capelli's Lemma there exists an
  $m$ such that $f^n(x) - \alpha_i$ is irreducible over $L(\alpha_i)$
  for all $\alpha_i$ such that $f^m(\alpha_i) = \beta$ and all $n\geq m$ 
  (see~\cite[Prop 4.2]{BT2} for a proof of this fact).  Applying
  \cite[Proposition~7.7]{Quad1} (which extends verbatim to our setting if $s=1$ in \cite[Proposition~7.7]{Quad1}) and also applying \cite[Proposition~8.1]{Quad1} (again in the special case $s=1$, which does not require the results of \cite[Section~5]{Quad1}), we see then that there is an integer $n_1$
  such that for all $n > n_1$, the field $L_n(\beta):=L(f^{-n}(\beta))$ contains no
  nontrivial extensions of $L_{n-1}(\beta)$ that are unramified
  over $L_{n-1}(\beta)$.  Let $\gamma$ be any element of
  $\Qbar \cap L_{\ell}(\beta)$ for some $\ell$.  Let $N$ be minimal among all
  integers such that $\gamma \in L_N(\beta)$.  Since
  $L_{N-1}(\beta)(\gamma)$ is unramified over
  $L_{N-1}(\beta)$, it follows that $N \leq n_1$.  Hence
  $\Qbar \cap \bigcup_{n=1}^\infty L(f^{-n}(\beta))$ is a finite extension of $L$, as
  desired. 
\end{proof}

\begin{proof}[Proof of Theorem \ref{p-theorem}]
As proven in \cite[Proposition~3.2]{Quad1}, we already know that the conditions are necessary. Therefore assume 
 that $\beta$ is not postcritical nor periodic for $f$. 
 By Theorem \ref{stable-theorem}, the pair $(f,\beta)$ is eventually
 stable.  Again using Capelli's Lemma, there is some $m$ such that for all $\alpha\in f^{-m}(\beta)$ and for all $n\geq 1$, 
 $f^n(x)-\alpha$ is irreducible over $K_m(\beta)$. By \cite[Proposition~7.7]{Quad1} and also using \cite[Proposition~8.1]{Quad1} (which are both valid in our setting  in their special case when $s=1$, which does not require the results of \cite[Section~5]{Quad1}),   
 there exists $n_0$ such that for all $n\geq n_0$, we have
 \[\Gal(K_n(\beta)/K_{n-1}(\beta))\cong C_q^{q^{n-1}}.\]
 By Proposition~\ref{indexprop}, we are done.
\end{proof}

 \begin{proof} [Proof of Theorem \ref{disjoint-theorem}]
We note that Theorem~\ref{disjoint-theorem} was proven in \cite[Theorem~1.4]{Quad1} when $K$ is a function field of transcendence degree $1$ over a number field. So, from now on, assume $K$ has transcendence degree  $m>1$ (over a number field). Then (at the expense of replacing $K$ by a finite extension) there exists a finite tower of field extensions $L_0\subset L_1\subset \cdots \subset L_m=K$, where each $L_i/L_{i-1}$ is a function field of (relative) transcendence degree $1$ and furthermore, $L_0$ is a number field. Our strategy is to show that given any function field $K/L$  of transcendence degree $1$ (where $L$ itself is a function field over another field $F$) with the property that the $\alpha_i$'s and the $c_i$'s satisfy the hypotheses of Theorem~\ref{disjoint-theorem}, then there exists a specialization at a place $\gamma$ for the function field $K/L$ with the property that the corresponding $\alpha_i(\gamma)$'s and the $c_i(\gamma)$'s still satisfy the same hypotheses. So, after finitely many suitable specializations, we descend our problem to the case when all the points are contained in a function field of transcendence degree $1$ over a number field and therefore, the desired conclusion is delivered by \cite[Theorem~1.4]{Quad1}.

First of all, we let $h_{K/L}$ be the Weil height for the function field $K=L(C)$ where $C$ is a projective, smooth curve defined over $L$. We assume the $\alpha_i$'s and the $c_i$'s verify the hypotheses of Theorem~\ref{disjoint-theorem}. Also, we let $h_C:C(\Lbar)\lra \mathbb{R}_{\ge 0}$ be the Weil height associated to a given ample divisor on $C$. For each polynomial $f\in K[x]$ of degree larger than one, we have the canonical height $\widehat{h}_f$ associated to the polynomial $f$. Also, since $L$ itself is a function field over some other field $F$, then for each point $z$ of $\Lbar$, we let $h_{L/F}(z)$ be its Weil height computed with respect to the function field $L/F$.

For each $\gamma\in C(\Lbar)$ for which the coefficients of $f(x)$ are well-defined at $\gamma$, we let $f_\gamma$ be the specialization of $f$ at $\gamma$; in our proof, we will work with $f(x):=x^q+c$ for $c\in K$ and thus (viewing $c\in L(C)$), the specialization $f_\gamma(x):=x^q+c(\gamma)$ is constructed for points $\gamma\in C(\Lbar)$ such that $c(\gamma)$ is well-defined; clearly, the $c_i$'s and the $\alpha_i$'s are well-defined at all but finitely many specializations for the function field $K/L$. Also, for all but finitely many specializations, we have that
\begin{itemize}
\item[(i)] if for a pair of distinct indices $i\ne j$, we have that $c_i/c_j$ is not a $(q-1)$-st root of unity, then also $c_i(\gamma)/c_j(\gamma)$ is not a $(q-1)$-st root of unity. 
\end{itemize} 

\begin{lem}
\label{lem:0spec}
There exists $M>0$ such that whenever $h_C(\gamma)>M$, we have that each $c_i(\gamma)\notin\Qbar$.
\end{lem}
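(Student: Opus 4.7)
My plan is to handle each index $i$ separately by splitting on whether $c_i\in K=L(C)$ is constant along $C$, i.e., whether $c_i\in L$ or $c_i\in K\setminus L$. If $c_i\in L$, then $c_i(\gamma)=c_i$ for every $\gamma$ where $c_i$ is well-defined, so the hypothesis $c_i\notin\Qbar$ of Theorem~\ref{disjoint-theorem} already gives the conclusion with no constraint whatsoever on $h_C(\gamma)$.

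The substantive case is $c_i\in K\setminus L$. Here, $c_i$ is a non-constant element of $L(C)$ and therefore defines a finite $L$-morphism $c_i:C\to\bP^1_L$ of some positive degree. I would apply the Weil height machine on the smooth projective curve $C/L$: the pullback $c_i^*H$ of a hyperplane divisor $H$ on $\bP^1_L$ is a divisor on $C$ of positive degree, and on a smooth projective curve any two divisors of positive degree give heights that agree up to a positive scaling factor and a bounded error. Combined with the observation that the Weil height on $\bP^1_L$, when restricted to points of $\Lbar$, coincides with $h_{L/F}$, this produces positive constants $A_i>0$ and $B_i$, depending only on $c_i$ and on the choice of ample divisor used to define $h_C$, such that
\[
h_{L/F}(c_i(\gamma)) \;\geq\; A_i\,h_C(\gamma)-B_i
\]
for every $\gamma\in C(\Lbar)$ outside the finite pole set of $c_i$.

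To conclude, I would invoke the standard fact that every $z\in\Qbar\subset\Lbar$ has its minimal polynomial over $L$ with coefficients in the (algebraically closed) constant field of $L/F$, and hence satisfies $h_{L/F}(z)=0$; this uses only that we are in characteristic $0$, so that $\Q\subseteq F$. Contrapositively, $h_{L/F}(c_i(\gamma))>0$ forces $c_i(\gamma)\notin\Qbar$, and the displayed inequality shows that this occurs whenever $h_C(\gamma)>B_i/A_i$. Setting $M:=\max_i(B_i/A_i)$ over those indices $i$ with $c_i\in K\setminus L$ (and taking $M$ to be any positive number otherwise) completes the argument. The only step that requires care is the height estimate leading to the displayed inequality; this is a routine consequence of the height machine on curves, and the remainder of the proof is bookkeeping to ensure that the null locus of $h_{L/F}$ on $\Lbar$ contains $\Qbar$.
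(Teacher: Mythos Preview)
Your proof is correct and follows essentially the same approach as the paper's: split on whether $c_i$ is constant along $C$, and in the nonconstant case use a height comparison to force $h_{L/F}(c_i(\gamma))>0$ for $h_C(\gamma)$ large, then invoke that elements of $\Qbar$ have zero height for $L/F$. The only cosmetic difference is that the paper packages the height comparison as the Call--Silverman specialization limit $\lim_{h_C(\gamma)\to\infty} h_{L/F}(c(\gamma))/h_C(\gamma)=h_{K/L}(c)>0$, whereas you invoke the Weil height machine on curves directly to obtain a linear lower bound; these are the same underlying fact.
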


\begin{proof}[Proof of Lemma~\ref{lem:0spec}.]
Clearly, it suffices to prove this statement for one $c_i$, which we will denote by $c$. Now, if $c\in \Lbar$, then any specialization works as desired. On the other hand, if $c\notin\Lbar$, then $h_{K/L}(c)>0$ and using \cite{CallSilverman}, we have that $\lim_{h_C(\gamma)\to\infty} h_{L/F}(c(\gamma))/h_C(\gamma)=h_{K/L}(c)>0$, thus proving that for some $M>0$, whenever $h_C(\gamma)>M$ then $h_{L/F}(c(\gamma))>0$, which yields that $c(\gamma)\notin \Qbar$ (since those points would have height equal to $0$ for the function field $L/F$).
\end{proof}

Next we show that for all specializations of sufficiently large height, 
\begin{itemize}
\item[(ii)] there are no distinct indices $i\ne j$ such that $(\alpha_i(\gamma), \alpha_j(\gamma))$ lies on a periodic curve under the action of $(f_{i,\gamma}, f_{j,\gamma})$.
\end{itemize}
We achieve this goal by combining Lemmas~\ref{lem:specialization1}~and~\ref{lem:specialization2}. 
\begin{lem}
\label{lem:specialization1}
Let $K=L(C)$ be the function field of a curve (where $L/F$ is itself a function field), let $c\in K$, let $f(x):=x^q+c$ and let $\beta_1,\beta_2\in K$ such that there is no integer $n\ge 0$ with the property that $f^n(\beta_1)=\beta_2$. If either $c$ or $\beta_1$ is not contained in $\Lbar$, then there exists a real number $M_0>0$  such that for all points $\gamma\in C(\Lbar)$ satisfying $h_C(\gamma)>M_0$, we have that there exists no integer $n\ge 0$ such that $f_\gamma^n(\beta_1(\gamma))=\beta_2(\gamma)$.
\end{lem}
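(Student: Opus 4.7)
The plan is to argue via the Call--Silverman canonical height $\widehat{h}_f$ and the specialization theorem
\[
\lim_{h_C(\gamma)\to\infty}\frac{\widehat{h}_{f_\gamma}(z(\gamma))}{h_C(\gamma)} \;=\; \widehat{h}_f(z),
\]
which holds for any fixed $z\in K$. I split into two cases according to whether $\widehat{h}_f(\beta_1)>0$ or $\widehat{h}_f(\beta_1)=0$. The first preliminary task is to show that, under the standing hypothesis ``$c\notin\Lbar$ or $\beta_1\notin\Lbar$'', the vanishing $\widehat{h}_f(\beta_1)=0$ forces $\beta_1$ to be preperiodic for $f$. When $c\notin\Lbar$, $f$ is non-isotrivial over the constant field $\Lbar$ of $K=L(C)$, and the function-field analogue of Baker's theorem (Benedetto) identifies the zero locus of $\widehat{h}_f$ with the set of preperiodic points. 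When $c\in\Lbar$, every place of $K/L$ satisfies $|c|_v=1$, so $h_{K/L}(f(z))=q\,h_{K/L}(z)$ identically, giving $\widehat{h}_f(z)=h_{K/L}(z)$; the assumption $\beta_1\notin\Lbar$ then forces $\widehat{h}_f(\beta_1)>0$, excluding this sub-case.

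In the preperiodic case, the orbit $\{f^n(\beta_1):n\ge 0\}\subset K$ is finite, and by hypothesis none of its elements equals $\beta_2$. Each nonzero function $f^n(\beta_1)-\beta_2\in K$ vanishes at only finitely many $\gamma\in C(\Lbar)$, so taking $M_0$ to exceed the heights of all these bad points (and a finite exceptional set where coefficients fail to specialize) handles this case.

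In the case $\widehat{h}_f(\beta_1)>0$, the specialization theorem gives, for every $\gamma$ with $h_C(\gamma)$ sufficiently large,
\[
\widehat{h}_{f_\gamma}(\beta_1(\gamma))\;\ge\; \tfrac12\, h_C(\gamma)\,\widehat{h}_f(\beta_1)
\qquad\text{and}\qquad
\widehat{h}_{f_\gamma}(\beta_2(\gamma))\;\le\; 2\,h_C(\gamma)\bigl(\widehat{h}_f(\beta_2)+1\bigr).
\]
Combined with the functional equation $\widehat{h}_{f_\gamma}(f_\gamma^n(\beta_1(\gamma)))=q^n\,\widehat{h}_{f_\gamma}(\beta_1(\gamma))$, this produces a $\gamma$-independent threshold $N_0$, depending only on $\widehat{h}_f(\beta_1)$ and $\widehat{h}_f(\beta_2)$, such that for every $n\ge N_0$ and every such $\gamma$ one has $\widehat{h}_{f_\gamma}(f_\gamma^n(\beta_1(\gamma)))>\widehat{h}_{f_\gamma}(\beta_2(\gamma))$; in particular $f_\gamma^n(\beta_1(\gamma))\ne \beta_2(\gamma)$. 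For each of the finitely many $n<N_0$, the hypothesis gives $f^n(\beta_1)-\beta_2\ne 0$ in $K$, and non-vanishing at all but finitely many $\gamma$ handles these. Choosing $M_0$ to exceed both the specialization threshold and the heights of all finitely many bad $\gamma$ completes the case.

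The main obstacle is obtaining the $n$-uniform threshold $N_0$ in the positive-height case, since a naive application of specialization only yields the desired inequality one $n$ at a time, with an $n$-dependent exceptional set of $\gamma$. The resolution is that both $\widehat{h}_{f_\gamma}(\beta_i(\gamma))$ scale linearly in the common factor $h_C(\gamma)$, so after dividing out this scale the ratio stabilizes and the exponential factor $q^n$ dominates uniformly in $\gamma$.
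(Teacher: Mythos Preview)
Your proposal is correct and follows essentially the same route as the paper: reduce the case $\widehat{h}_f(\beta_1)=0$ to the preperiodic case via Benedetto, handle the preperiodic case by the finiteness of the orbit, and in the positive-height case use the Call--Silverman specialization theorem to obtain linear-in-$h_C(\gamma)$ control on both sides, which yields a $\gamma$-independent bound $N_0$ on $n$ and leaves only finitely many equations $f^n(\beta_1)=\beta_2$ to rule out at finitely many bad $\gamma$. The only cosmetic difference is that the paper bounds $\beta_2(\gamma)$ via the Weil height $h_{L/F}$ and then compares $h_{L/F}$ with $\widehat{h}_{f_\gamma}$ using a uniform $O(h_C(\gamma))$ error term, whereas you apply the specialization limit directly to $\widehat{h}_{f_\gamma}(\beta_2(\gamma))$; both give the same inequality $(q^nC_0-C')\,h_C(\gamma)<C''$ and hence the same uniform threshold.
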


\begin{proof}[Proof of Lemma~\ref{lem:specialization1}.]
First we note that if $\beta_1$ is preperiodic, then its orbit under $f$ is finite and therefore, away from finitely many points of $C$, the specialization of $\beta_2$ will avoid the corresponding specialization of a point in the orbit of $\beta_1$ under $f$. So, from now on, we assume $\beta_1$ is not preperiodic.

Since not both $c$ and $\beta_1$ are contained in $\Lbar$, then we get that $\widehat{h}_f(\beta_1)>0$. Indeed, we know that $\beta_1$ is not preperiodic and therefore, according to \cite{Ben1}, either $\widehat{h}_f(\beta_1)>0$ or the pair $(f,\beta_1)$ is isotrivial for the function field $K/L$, which in our case is equivalent with both $c$ and $\beta_1$ being contained in $\Lbar$ (note that either $f(x)=x^q+c$ is defined over $\Lbar$, or no conjugate of $f$ under a linear transformation would be defined over $\Lbar$). So, our hypothesis for Lemma~\ref{lem:specialization1} yields that $\widehat{h}_f(\beta_1)>0$. Then \cite{CallSilverman} yields that
\begin{equation}
\label{eq:spec Sil}
\lim_{h_C(\gamma)\to\infty} \frac{\widehat{h}_{f_\gamma}(\beta_1(\gamma))}{h_C(\gamma)}=\widehat{h}_f(\beta_1)>0.
\end{equation} 
In particular, there exist positive real numbers $C_0$ and $M_0$ such that that if $h_C(\gamma)>M_0$, then 
\begin{equation}
\label{eq:spec Sil 1.5}
\widehat{h}_{f_\gamma}(\beta_1(\gamma))>C_0\cdot h_C(\gamma). 
\end{equation}
Again using \cite{CallSilverman} (see also \cite{SilvermanADS}), we get that there exist positive real numbers $C_1$ and $C_2$ such that
\begin{equation}
\label{eq:spec Sil 2}
h_{L/F}(\beta_2(\gamma))\le C_1\cdot h_C(\gamma)+C_2.
\end{equation} 
Using \cite{SilvermanADS} (see also \cite[Proposition~2.4]{variation_paper}), we get that there exist positive constants $C_3$ and $C_4$ such that for each $z\in \Lbar$, we have 
\begin{equation}
\label{eq:spec Sil 3}
h_{L/F}(z)>\widehat{h}_{f_\gamma}(z)-C_3\cdot h_C(\gamma)-C_4.
\end{equation}
Now, let $\gamma\in C(\Lbar)$ such that $h_C(\gamma)>M_0$ and assume there exists some nonnegative integer $n$ such that $f_\gamma^n(\beta_1(\gamma))=\beta_2(\gamma)$. Using \eqref{eq:spec Sil 3} and the definition of the canonical height, we get
\begin{equation}
\label{eq:spec Sil 4}
h_{L/F}\left(f_\gamma^n(\beta_1(\gamma))\right)> \widehat{h}_{f_\gamma}\left(f_\gamma^n(\beta_1(\gamma))\right) - C_3h_C(\gamma)-C_4=q^n\widehat{h}_{f_\gamma}(\beta_1(\gamma))-C_3h_C(\gamma)-C_4.
\end{equation}
Using inequality \eqref{eq:spec Sil 1.5} in \eqref{eq:spec Sil 4}, we get
\begin{equation}
\label{eq:spec Sil 5}
h_{L/F}\left(f_\gamma^n(\beta_1(\gamma))\right)>(q^nC_0-C_3)\cdot h_C(\gamma)-C_4. 
\end{equation}
Now, combining inequalities \eqref{eq:spec Sil 2} and \eqref{eq:spec Sil 5} along with the equality $f_\gamma^n(\beta_1(\gamma))=\beta_2(\gamma)$, we obtain  that
\begin{equation}
\label{eq:spec Sil 6}
(q^nC_0-C_3-C_1)\cdot h_C(\gamma)< C_2+C_4,
\end{equation}
which yields that if $h_C(\gamma)>M_0$, then $$n<\log_q\left(\frac{\frac{C_2+C_4}{M_0}+C_1+C_3}{C_0}\right).$$
On the other hand, for each of the finitely many nonnegative integers $n$ satisfying the above inequality, there exist finitely many points $\gamma\in C(\Lbar)$ such that $f_\gamma^n(\beta_1(\gamma))= \beta_2(\gamma)$. So, at the expense of replacing $M_0$ by a larger number,  we obtain the conclusion from Lemma~\ref{lem:specialization1}. 
\end{proof}

Next we note that if we are to assume that $c$, $\beta_1$ and also $\beta_2$ are contained in $\Lbar$, then for \emph{every} specialization, we get that these elements are unchanged through the corresponding specialization and therefore, if originally, there was no integer $n$ such that $f^n(\beta_1)=\beta_2$ (where $f(x):=x^q+c$), then this conclusion remains valid for \emph{each} specialization. So, we are left to analyze the case when $c,\beta_1\in\Lbar$, while $\beta_2\in K\setminus \Lbar$.

\begin{lem}
\label{lem:specialization2}
Let $c,\beta_1\in \Lbar$, let $f(x):=x^q+c$ and let $\beta_2\in K\setminus\Lbar$. If $c\notin\Qbar$, then the following statements hold: 
\begin{itemize}
\item[(a)] If $\beta_1$ is preperiodic under the action of $f$, then for all but finitely many specializations $\gamma$, there is no integer $n$ such that $f^n(\beta_1)=\beta_2(\gamma)$. 
\item[(b)] If $\beta_1$ is not preperiodic under the action of $f$, then there exist positive constants $C_8$ and $C_9$ such that for any nonnegative integer $n$ and for any $\gamma\in C(\Lbar)$, if $f^n(\beta_1)=\beta_2(\gamma)$ then 
$$\left|h_C(\gamma)-C_8\cdot q^n\right|\le C_9.$$
\end{itemize} 
\end{lem}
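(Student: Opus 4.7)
My plan treats the two parts separately: part~(a) is essentially combinatorial, while part~(b) requires a careful height comparison.

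For part~(a), preperiodicity of $\beta_1$ makes the orbit $\{f^n(\beta_1):n\ge 0\}$ a finite set $\{\alpha_1,\dots,\alpha_r\}\subset\Lbar$. Since $\beta_2\in K\setminus\Lbar$ is a non-constant rational function on $C$ over $L$, each equation $\beta_2(\gamma)=\alpha_i$ has only finitely many solutions $\gamma\in C(\Lbar)$; their union over $i=1,\dots,r$ is still finite.

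For part~(b), the strategy is to derive two asymptotic expressions for the height of the common value $f^n(\beta_1)=\beta_2(\gamma)\in\Lbar$ and then equate them. First I would fix a Weil height $h^{\sharp}$ on $\Lbar$ for which $h^{\sharp}(c)>0$. Since $c\in\Lbar\setminus\Qbar$ and the tower $L_0\subset L_1\subset\cdots\subset L_m=K$ from the proof of Theorem~\ref{disjoint-theorem} starts with a number field $L_0$, the element $c$ must be non-constant for the function-field structure at some intermediate level, and I take $h^{\sharp}$ to be the associated Weil height. Then $f(x)=x^q+c$ is non-isotrivial with respect to this structure and $\beta_1$ is non-preperiodic, so Benedetto's theorem~\cite{Ben1} gives $\widehat{h}^{\sharp}_f(\beta_1)>0$. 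The Call--Silverman comparison~\cite{CallSilverman} then yields
\[
h^{\sharp}(f^n(\beta_1))=q^n\,\widehat{h}^{\sharp}_f(\beta_1)+O(1).
\]
On the other hand, since $\beta_2$ is a non-constant element of $K=L(C)$ defining a finite morphism $C\to\P^1$, the standard specialization bound (analogous to the one used in the proof of Lemma~\ref{lem:specialization1}) supplies positive constants $e$ and $C^{\sharp}$ with
\[
\bigl|h^{\sharp}(\beta_2(\gamma))-e\cdot h_C(\gamma)\bigr|\le C^{\sharp}\quad\text{for all }\gamma\in C(\Lbar).
\]
Equating the two estimates on $f^n(\beta_1)=\beta_2(\gamma)$ produces $|h_C(\gamma)-C_8 q^n|\le C_9$ with $C_8:=\widehat{h}^{\sharp}_f(\beta_1)/e>0$ and $C_9$ the combined error.

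The main obstacle is the correct choice of $h^{\sharp}$. The height $h_{L/F}$ used elsewhere in the paper sees only the topmost level of the tower: if $c$ happens to lie in $\overline{F}$ (even with $c\notin\Qbar$) then $h_{L/F}(c)=0$ and the positivity of $\widehat{h}_f(\beta_1)$ fails. To circumvent this one descends through the tower from $L$ toward $L_0$ until the level at which $c$ first becomes non-constant; the hypothesis $c\notin\Qbar$ guarantees such a level exists. Once $h^{\sharp}$ is fixed, the remaining height comparisons are straightforward variants of those already employed in the proof of Lemma~\ref{lem:specialization1}.
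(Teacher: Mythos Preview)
Your proof is correct and follows the same two-step strategy as the paper: part~(a) is exactly the finite-orbit argument you describe, and part~(b) equates a Benedetto/Call--Silverman estimate $h^\sharp(f^n(\beta_1))=q^n\widehat{h}^\sharp_f(\beta_1)+O(1)$ with a functoriality estimate $h^\sharp(\beta_2(\gamma))=e\cdot h_C(\gamma)+O(1)$.

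The only place you work harder than necessary is the choice of $h^\sharp$. Rather than descending through the tower $L_0\subset\cdots\subset L_{m-1}=L$ to locate an intermediate level at which $c$ becomes non-constant, the paper simply takes $h^\sharp:=h_{L/\Q}$, the Weil height for the function field $L/\Q$ (which has transcendence degree $m-1\ge 1$). This is already a height on all of $\Lbar$, and $h_{L/\Q}(c)>0$ follows directly from $c\notin\Qbar$, so \cite{Ben1} applies without further manoeuvring. Your intermediate-level idea works in spirit, but as written it yields a height only on some $\overline{L_j}\subsetneq\Lbar$, which then has to be reconciled with the height $h_C$ on $C(\Lbar)$; taking $h_{L/\Q}$ from the start avoids this compatibility question entirely.
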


\begin{proof}[Proof of Lemma~\ref{lem:specialization2}.] 
The proof of part~(a) is identical with the corresponding case ($\beta_1$ is preperiodic under the action of $f$) from Lemma~\ref{lem:specialization1}. So, from now on, we assume $\beta_1$ is not preperiodic under the action of $f$. 

Now, since $c\notin \Qbar$, we have that the polynomial $f(x)=x^q+c$ is not isotrivial for the function field $L/\Q$ and so, \cite{Ben1} yields that $C_5:=\widehat{h}_{f,L/\Q}(\beta_1)>0$ (where $\widehat{h}_{f,L/\Q}$ is the canonical height of the polynomial $f\in \Lbar[x]$ constructed with respect to the height for the function field $L/\Q$, which is a function field of finite transcendence degree, most likely larger than one). As in the proof of Lemma~\ref{lem:specialization1}, we have (see \cite{SilvermanADS}) that 
\begin{equation}
\label{eq:spec Sil 7}
\left|h_{L/\Q}(\beta_2(\gamma)) -C_1\cdot h_C(\gamma)\right| \le C_2
\end{equation} 
for some positive constants $C_1$ and $C_2$; note that $C_1>0$ since $\beta_2\notin\Lbar$. Also, there exists a positive constant $C_7$ (see \cite{CallSilverman}) such that for each $z\in \Lbar$ we have  
\begin{equation}
\label{eq:spec Sil 8}
\left|h_{L/\Q}(z)-\widehat{h}_{f, L/\Q}(z)\right|\le C_7.
\end{equation}
So, if $f^n(\beta_1)=\beta_2(\gamma)$ for some nonnegative integer $n$, then equations \eqref{eq:spec Sil 7} and \eqref{eq:spec Sil 8} (along with the fact that $\widehat{h}_{f,L/\Q}(f^n(\beta_1))=q^n\cdot C_5$) yield that
\begin{equation}
\label{eq:spec Sil 9}
\left|C_1h_C(\gamma)-q^nC_5\right|\le C_2+C_7.
\end{equation}
Then taking $C_8:=\frac{C_5}{C_1}$ and $C_9:=\frac{C_2+C_7}{C_1}$, we see that inequality \eqref{eq:spec Sil 9} provides the desired conclusion from part~(b) of Lemma~\ref{lem:specialization2}. 
\end{proof}

Now we explain how to combine Lemmas~\ref{lem:0spec},~\ref{lem:specialization1}~and~\ref{lem:specialization2} to provide the conclusion~(ii) from above for some suitable specialization at a point $\gamma\in C(\Lbar)$. First, we notice that since no $c_i\in\Qbar$, then (in particular) $c_i\ne 0, -2$ (note that $c_i=0$ yields a monomial function $f_i(x)$, while $c_i=-2$ and $d=2$ yields the second Chebyshev polynomial); hence, in the language from \cite{MedvedevScanlon} (see also \cite{G-Adv, GN-IMRN, G-JEMS}), the polynomials $f_i(x)=x^q+c_i$ are disintegrated, or non-special (i.e., not conjugated to monomials or Chebyshev polynomials). Also, the hypothesis of Theorem~\ref{disjoint-theorem} yields that no $\alpha_i$ can be periodic for the corresponding polynomial $f_i$. Now, \cite[Proposition~7.7]{G-JEMS} yields that if there exists a plane curve, projecting dominantly onto each coordinate, which is periodic under the action of $(x,y)\mapsto (f_i(x), f_j(y))$, then there must be some $(q-1)$-st root of unity $\zeta$ such that $c_j=\zeta\cdot c_i$. Furthermore, as proven in \cite{MedvedevScanlon} as a result of a deep analysis of polynomial decompositions along with a powerful study of the model theory of algebraically closed field with a distinguished automorphism \emph{ACFA} (see also \cite[Proposition~2.5]{G-Adv} and \cite[Proposition~5.5]{GN-IMRN}), assuming $c_j=\zeta c_i$, we have that each  plane curve (projecting dominantly onto each coordinate) which is periodic under the action of $(x,y)\mapsto (f_i(x), f_j(y))$ must be of the form
\begin{equation}
\label{eq:spec 11}
y=\zeta\cdot f_i^n(x)\text{ for some }n\ge 0
\end{equation}
or
\begin{equation}
\label{eq:spec 12}
x=\zeta^{-1}\cdot f_i^n(y)\text{ for some }n\ge 0.
\end{equation}
Using the fact that the only periodic curves under the action of $(x,y)\mapsto (f_i(x), f_j(x))$ are the ones described by equations \eqref{eq:spec 11} and \eqref{eq:spec 12} and that can only happen if $c_j=\zeta\cdot c_i$, then Lemmas~\ref{lem:specialization1}~and~\ref{lem:specialization2} yield that any point $\gamma\in C(\Lbar)$ for which $f_C(\gamma)$ is sufficiently large (see Lemma~\ref{lem:specialization1}) and, furthermore, which also satisfies the property 
\begin{equation}
\label{eq:spec 13}
h_C(\gamma)\notin  \left[C_8q^n-C_9, C_8q^n+C_9\right]\text{ for all }n\ge 0
\end{equation}
for some suitable positive constants $C_8$ and $C_9$ (see Lemma~\ref{lem:specialization2}) 
would induce a specialization which satisfies the conditions~(i)-(ii) above. Easily, we see that there exist infinitely many such suitable specializations. 


Now, for such a suitable specialization at a point $\gamma\in C(\Lbar)$, we note that specializing at $\gamma$ a field extension $K_1\subset K_2$ (which are themselves finite field extensions of $K$) yields a finite field extension $L_1\subset L_2$ (which are themselves finite extensions of $L$) and moreover, 
\begin{equation}
\label{eq:spec 20}
[L_2:L_1]\le [K_2:K_1].
\end{equation}
So, following the proof of Theorem~\ref{disjoint-theorem} for the specializations of $f_i$ and of $\alpha_i$ at $\gamma$ yields that there is a (minimal) integer $n_2$ such for all $n > n_2$, we have
\begin{equation}
\label{eq:spec 21}
\Gal(L_{n}(\bF_\gamma,\ba(\gamma))/ L_{n-1}(\bF_\gamma,\ba(\gamma))) \cong C_q^{s q^{n-1}},
\end{equation}
where $L_n$ is the specialization at $\gamma$ of $K_n$ (while $\bF_\gamma$ and $\ba(\gamma)$ are the respective specializations of $\bF$ and $\ba$ at $\gamma$). 
Then combining \eqref{eq:spec 20} with \eqref{eq:spec 21} yields that 
$$\Gal(K_{n}(\bF,\ba)/ K_{n-1}(\bF,\ba)) \cong C_q^{s q^{n-1}},$$
and then the rest of the proof of Theorem~\ref{disjoint-theorem} follows verbatim as in the proof of \cite[Theorem~1.4]{Quad1}.  
\end{proof}

\begin{proof}[Proof of Theorem~\ref{thm:P1n}.]
Assume that condition~(B) does not hold. Then for each $i=1,\dots, m$, we have that $\alpha_i$ is not a postcritical point (i.e., iterate of $0$ under the map $f_i$). Now, assume that also condition~(C) does not hold; in particular, this means that no $\alpha_i$ is a periodic point for the corresponding map $f_i$. Using the fact that $\alpha_i$ is neither periodic nor postcritical, Theorem~\ref{p-theorem} yields that each Galois group $G_\infty(f_i,\alpha_i)$ has finite index inside the corresponding $G_\infty(f_i,\bP^1)$. 

Furthermore, since condition~(C) does not hold, then there are no
  distinct $i, j$ with the property that $(\alpha_i, \alpha_j)$ lies on a plane curve 
  that is periodic under the action of $(x,y) \mapsto (f_i(x),
  f_j(y))$. So, letting $M_i$ be $K_\infty(f_i,\alpha_i)$,  then for each $i=1,\dots, m$, we have that 
\begin{equation}
\label{eq:field_disjoint} 
\left[M_i \cap \left(\prod_{j \ne i} M_j\right): K\right] < \infty ,
\end{equation}
by Theorem~\ref{disjoint-theorem}. Using \eqref{eq:field_disjoint}, we obtain that $G_\infty(\Phi,\underline{\alpha})$ has finite index in $\prod_{i=1}^m G_\infty(f_i,\alpha_i)$ and then combining this information with the fact that $G_\infty(f_i,\alpha_i)$ has finite index in $G_\infty(f_i,\bP^1)$, we obtain the desired conclusion from Theorem~\ref{thm:P1n}.
\end{proof}

\begin{proof}[Proof of Theorem \ref{cubic}.]
At the expense of replacing $f(x)$ with a conjugate of it, we may assume that  $f(x):=x^3 + bx + a$.  Since $f^{-n}(\beta)$ is
  algebraic over $\Q(a,b,\beta)$ for all $n$, we may assume then that
  $K$ is a finite extension of $\Q(a,b,\beta)$.  In the case where $K$
  has transcendence degree 1 over $\Q$, then \cite[Theorem 1.1]{BT2}
  states $[\Aut(T^3_\infty):G_\infty] < \infty$.  If $\beta$ is not
  algebraic over $\Q(a,b)$, then \cite[Proposition 12.1]{BT2} gives the
  even stronger result that $[\Aut(T_\infty):G_\infty] =1 $.

  Thus, we are left with treating the case where $a$ and $b$ are
  algebraically independent and $\beta$ is algebraic over $\Q(a,b)$.
  We treat this case by specializing from $K$ to a finite extension of
  $\Q(b)$, so that we may apply  \cite[Theorem 1.1]{BT2}; in particular, we will work with specializations $t$ such
  that $a_t = G(b)$ where $G$ is a polynomial with positive integer
  coefficients.  Note that when $f_t(x) := x^3 + bx + G(b)$, for $G$ an even 
  polynomial whose nonzero coefficients are positive integers (say, $G(b)=b^4+3b^2+5$), $f_t$ must be
  eventually stable, since $f$ can be further specialized to a
  polynomial of the form $x^3 + 3mx + n$ where $m, n$ are integers,
  and such polynomials are known to be eventually stable over any
  finite extension of $\Q$ by \cite{RafeAlon}.  Furthermore, we then
  have $f_t^i((\gamma_1)_t) \not= f_t^i((\gamma_2)_t)$ for all $i$
  since sending $b$ to $-3e^2$, for $e$
  a positive integer, yields critical points $\pm e$ and
  $f_t^i(-e) > f_t^i(e)$ for all $i$ for a polynomial $G$ as above (whose only nonzero terms have even degree and positive integer coefficients).  Similarly, if $\deg G(b)$ is larger
  than two then neither $(\gamma_1)_t$ nor $(\gamma_2)_t$ can be
  preperiodic, since the heights of the iterates of each must go to
  infinity.  

  Now, by \cite[Theorem 4.1]{CallSilverman}, we have
\begin{equation}\label{hb}
  \lim_{h(t) \to \infty}
  \frac{\widehat{h}_{f_t}(\beta)}{h(t)} = \widehat{h}_f(\beta),
\end{equation}
where $\widehat{h}_f$ is the canonical height for the polynomial $f$ with respect to the heights on the function field $\Q(a,b)/\Q(b)$, while $\widehat{h}_{f_t}$ is the canonical height of the specialization polynomial $f_t$ with respect to the heights for the function field $\Q(b)/\Q$; also, $h(t)$ refers to the Weil height for the curve $\bP^1_{\Q(b)}$ so that the height of the point when we specialize $a_t:=G(b)$ is simply the degree of the polynomial $G$.
  
If $\beta$ is not
  preperiodic, then since $f$ is not isotrivial over $\Q(b)$, we have $\widehat{h}_f(\beta)
  > 0$ by \cite{Ben1}, so $\widehat{h}_{f_t}(\beta_t) > 0$ when $h(t)$ is large.  If $\beta$ is
  preperiodic, then there are at most finitely many specializations
  $t$ such that $\beta_t$ is periodic, since $\beta$ itself is not
  periodic.  Thus, in either event, $\beta_t$ is not periodic for all $t$
  of sufficiently large height.  We also have
  \begin{equation}\label{hg}
  \lim_{h(t) \to \infty}
  \frac{\widehat{h}_{f_t}(\gamma_i)}{h(t)} = \widehat{h}_f((\gamma_i)_t) > 0,
\end{equation}
again by \cite[Theorem 4.1]{CallSilverman} and \cite{Ben1}.  Thus,
choosing a $j$ such that $3^j \widehat{h}_f(\gamma_i) > \widehat{h}_f(\beta)$, for $i=
1, 2$, we see that for all specializations $t$ of sufficiently large height, we have
$\widehat{h}_{f_t}(f_t^N((\gamma_i)_t)) > \widehat{h}_{f_t}(\beta_t)$ for $i=1,2$ for all
$N \geq j$.  Since there are at most finitely many $t$ such that
$f_t^m((\gamma_i)_t) = \beta_t$ for $i=1,2$ and $m < j$, we see again
that for all $t$ of sufficiently large height, we have
$f_t^n((\gamma_i)_t) \not= \beta_t$ for all $n$.

Since we may choose a specialization $t$ such that $a_t = G(b)$, for
$G$ a polynomial of arbitrarily large degree, then there are specializations $t$ with $h(t)$ arbitrarily
large such that $f_t$ has the desired form $x^3 + bx + G(b)$ (where $G$ is a polynomial whose nonzero terms have even degrees and positive integer coefficients).  Hence, there is a
specialization $t$ such that $f_t$ is eventually stable, $\beta_t$ is
not post-critical and not periodic,  the critical points
$(\gamma_1)_t, (\gamma_2)_t$ of $f_t$ are not periodic, and we do not have
$f_t^i((\gamma_1)_t) = f_t^i((\gamma_2)_t)$ for any positive integers
$i$.  Thus, the pair $(f_t, \beta_t)$ satisfies the hypotheses of
\cite[Theorem 1.1]{BT2}.  Now, for all $n$ the degree $[K(f^{-n}(\beta)):K]$ is at
least as large as $[k_t(f_t^{-n}(\beta_t)): k_t]$, where $k_t$ is the
field of definition of the point $t$, so  we must have
$[\Aut(T^3_\infty):G_\infty] < \infty$, as desired.  
\end{proof}


\section{The higher dimensional case for our main question}
\label{sec:higher}

It makes sense to consider the following more general question, which provides a geometric extension to Question~\ref{conj:general}. So, given a polarizable endomorphism $\Phi$ of a smooth, projective variety $X$ defined over a field $K$ of characteristic $0$, we let $G_\infty:=G_\infty(\Phi)$ be defined as in Question~\ref{conj:general} as the inverse limit of the Galois groups for the Galois closures of $K(X)/(\Phi^n)^*K(X)$ (as $n$ goes to infinity). Now, let $Z\subset X$ be a proper (closed, irreducible) subvariety and let $G_n(\Phi,Z)$ be the Galois group for Galois closure of the cover $\Phi^{-n}(Z)\lra Z$; then let $G_\infty(Z):=G_\infty(\Phi,Z)$ be the inverse limit of all these groups $G_n(\Phi,Z)$. Similar to Question~\ref{conj:general}, one expects that at least one of the following three possibilities must hold:
\begin{itemize}
\item[(A')] $[G_\infty:G_\infty(Z)]$ is finite;
\item[(B')] $\Phi^{-n}(Z)$ does not intersect properly with the ramification locus of $\Phi$, for some $n\ge 0$; or 
\item[(C')] there exists a proper subvariety $Y\subset X$ containing $Z$, which is invariant under the action of a non-identity self-map $\Psi:X\lra X$ with the property that $\Psi\circ\Phi^n=\Phi^n\circ \Psi$ for some positive integer $n$.
\end{itemize}

In the case when Question~\ref{conj:general} is known, then also the above question has positive answer. Indeed, if $\dim(X)=1$, then the above question is precisely the problem investigated by Question~\ref{conj:general}. On the other hand, when $X$ is an arbitrary abelian variety endowed with the multiplication-by-$m$ map $\Phi$, condition (B') above is vacuous since there is no ramification for $\Phi$. So, in this case, we have that either the index $[G_\infty:G_\infty(Z)]$ is finite, or $Z$ is contained in a proper algebraic subgroup of $X$, which is essentially the content of (C'). 

Furthermore, for any polarizable dynamical system $(X,\Phi)$, given any subvariety $Z\subset X$, it is sufficient to find one point $x$ inside $Z$ which does not satisfy conditions~(B)-(C) from Question~\ref{conj:general}; then, according to Question~\ref{conj:general}, we have that $[G_\infty:G_\infty(x)]<\infty$. Indeed, for any point $x\in Z$, we have that $G_\infty(x)$ is the decomposition group of $G_\infty(Z)$ corresponding to the prime associated to the closed point $x$ on $Z$ (this fact can be established by analyzing the statement at each level $n$ and then taking inverse limits).

So, as explained in the previous paragraph, as long as the closed subvariety $Z$ contains a point $x$ which verifies the conditions~(B)-(C) from Question~\ref{conj:general}, then we would expect that also $[G_\infty:G_\infty(Z)]<\infty$. On the other hand, one expects to find such a point $x\in Z$ as long as $Z$ does not satisfy conditions~(B')-(C'). For example, condition~(C') yields that $Z$ is not preperiodic under the action of $\Phi$ and therefore, assuming the Dynamical Manin-Mumford Conjecture holds (see \cite{Zhang}), there should be a Zariski dense set of non-preperiodic points inside $Z$. Actually, the stronger assumption from (C') which refers to the action on $Z$ by any self-map $\Psi$ commuting with $\Phi$ should yield - coupled with the Dynamical Manin-Mumford Conjecture - the existence of a point $x\in Z$ satisfying condition~(C), as predicted by \cite{GTZ-IMRN, Yuan-Zhang, G-Compositio}.

\bibliographystyle{amsalpha}
\bibliography{QuadBib}


\end{document}